\date{\today}
\newcommand{\Z}{{\mathbb Z}}
\newcommand{\R}{{\mathbb R}}
\newcommand{\N}{{\mathbb N}}
\renewcommand{\P}{{\mathbb P}}
\newcommand{\E}{{\mathbb E}}
\newcommand{\Prob}{\mathop{\mathbb{P}}\nolimits}
\newcommand{\mM}{\mathcal{M}}
\newcommand{\supp}{\mathop{\mathrm{supp}}\nolimits}
\newcommand{\Leb}{\mathop{\mathrm{Leb}}\nolimits}
\newcommand{\Ind}{1\!\!\mathrm{I}}
\newcommand{\tf}{\widetilde{f}}
\newcommand{\tF}{\widetilde{F}}
\newcommand{\tg}{\widetilde{g}}
\newcommand{\ty}{\widetilde{y}}
\newcommand{\rh}{\rho}
\newcommand{\shift}{\theta}
\newcommand{\Diff}{\mathrm{Diff}}
\newtheorem{theorem}{Theorem}[section]
\newtheorem*{theorem*}{Theorem}
\newtheorem{lemma}[theorem]{Lemma}
\newtheorem{prop}[theorem]{Proposition}
\newtheorem{coro}[theorem]{Corollary}
\newtheorem{example}[theorem]{Example}
\theoremstyle{definition}
\newtheorem{remark}[theorem]{Remark}
\newcommand{\mgr}{\mu}  
\newcommand{\msp}{\nu}  
\newcommand{\Sc}{\mathbb{S}^1}  
\title{The fibered rotation number}
\author[P.\ Duarte]{Pedro Duarte}
\address{Departamento de Ciencias Matematicas,
Faculdade de Ciencias da Universidade de Lisboa, P-1749-016 Lisboa, Portugal}
\thanks{P.\ D.\  was partially supported by FCT - Fundacao  para a Ciencia e a Tecnologia, through the project UIDB/04561/2020.}
\email{pmduarte@fc.ul.pt}
\author[A.\ Gorodetski]{Anton Gorodetski}
\address{Department of Mathematics, University of California, Irvine, CA~92697, USA}
\email{asgor@uci.edu}
\thanks{A.\ G.\ was supported in part by NSF grant DMS--2247966.}
\author[V. Kleptsyn]{Victor Kleptsyn}
\address{CNRS, IRMAR (UMR 6625 du CNRS), University of Rennes}
\email{victor.kleptsyn@univ-rennes1.fr}
\thanks{V.K. was supported in part by ANR Gromeov (ANR-19-CE40-0007) and by Centre Henri Lebesgue (ANR-11-LABX-0020-01)}
\begin{document}

\begin{abstract}
We provide an explicit formula for an increment of the fibered rotation number of a one-parameter family of circle cocycles over any ergodic transformation in terms of invariant measures. As an application, for a family of random dynamical systems on the circle, this gives a formula for an increment of the rotation number in terms of the stationary measures. In the case of projective Schr\"odinger cocycles associated with the Anderson Model, that provides a relation between the properties of the stationary measures on the projective space and the integrated density of states (IDS) of the corresponding family of operators. In particular, it gives a dynamical proof of H\"older regularity of the IDS in Anderson Model. 
Finally, we prove that the IDS for the Anderson Model with an ergodic background must be H\"older continuous.
\end{abstract}

\maketitle

\section{Introduction}\label{s.intro}

Since 1885, when H.\,Poincare introduced the notion of the rotation number of a homeomorphism $f:\Sc\to \Sc$ of the circle \cite{Po}, it turned into one of the most basic tools in dynamical systems. Usually it is defined via time averaged shift by a homeomorphism along its orbits, i.e. $\rho(f)=\lim_{n\to \infty}\frac{\tf^n(\tilde x)-\tilde x}{n}$, where $\tf:\R\to \R$ is a lift of the homeomorphism  $f:\Sc\to \Sc$. Due to Birkhoff Ergodic Theorem, it can also be defined via space average of such shifts with respect to an $f$-invariant measure $\nu$:
$$
\rho(f)=\int_{\Sc}(\tf(\tilde x)-\tilde x)d\nu(x).
$$
The main result of this paper is an explicit formula for an increment of the  fibered rotation number in terms of invariant measures of the corresponding cocycle, presented in Theorem \ref{t.main} below. The fibered rotation number, see Section \ref{s.main} for the formal definition, appears naturally in many models, and was studied or used in a variety of specific cases \cite{AFK, AK, BJ, DMW, Gou, H, HA}. In particular, it plays an important role in the spectral theory of ergodic Schr\"odiger operators, where it is closely related to the integrated density of states (IDS), see \cite{ABD, ADG,  CS1, CS2, JM, LW}.

We provide also two applications of this formula. The first one is an explicit expression for an increment of the rotation number of a one parameter family of random dynamical systems on the circle, Theorem \ref{t.main2}, and, as a corollary, the statement on H\"older continuity of the rotation number as a function of the parameter in this case, Theorem \ref{c:iid-Holder}. The latter statement should be considered as a non-linear analog of the well known  in spectral theory result on H\"older regularity of the IDS in Anderson Model, as we discuss in detail in  Remark \ref{r.AMHolder}.

We would like to mention that the main initial motivation for this paper came from an observation that there are many parallel results on regularity of IDS of random Schr\"odinger operators, e.g. see \cite{L}, \cite{CKM}, \cite{Bo}, \cite{SVW}, \cite{HV}, and regularity of stationary measures of smooth random dynamical systems, e.g. see \cite{GKM}, \cite{M}. Theorems \ref{t.main2} and \ref{c:iid-Holder} provide an explanation of these similarities.\footnote{For Anderson Model, an explicit formula that relates the stationary measure of the projective Schr\"odinger cocycle and the IDS of the corresponding family of random Schr\"odinger operators, was provided in \cite{Sch} and \cite[Theorem 2.3]{ST}. Nevertheless, that formula does not immediately provide a connection between regularity of the stationary measure and regularity of the IDS.}

As a second application mentioned above,
we use Theorem \ref{t.main} to show that for the Anderson Model with ergodic background the IDS must be H\"older continuous, see Theorem \ref{t.IDS}. Regularity of the IDS is a classical question in the theory of ergodic Schr\"odinger operators. In general, it is known to be log-H\"older, see \cite{CS1} for the classical theorem and \cite{GK4} for a dynamical proof that also allows a non-linear generalization of the Craig and Simon's result. For specific classes of ergodic operators, a better regularity of the IDS can be shown in many cases. The case of Anderson Model (random iid potentials) will be  discussed in  Remark \ref{r.AMHolder}  below. For the Fibonacci Hamiltonian, the IDS must be H\"older continuous \cite{DG1}, while it is not always the case for operators with Sturmian potentials, see \cite{Mun} for details. H\"older continuity of the IDS for some types of quasiperiodic potentials was established in \cite{AJ}, \cite{GoS}, \cite{HA}, \cite{GJ}, while it is also known that it does not always hold in this case \cite{ALSZ}.  Theorem \ref{t.IDS} adds another series of examples to this list.




\subsection*{Plan of the paper}

 In Section \ref{s.main} we provide the setting and the formal statements of all three main results.  Section \ref{ss.1}  deals with the new formula for the increment of the rotation number of a one-parameter family of cocycles with the circle diffeomorphisms on the fiber, see Theorem \ref{t.main}. Section \ref{ss.2} is devoted to rotation numbers of  random dynamical systems on the circle, see Theorems \ref{t.main2} and \ref{c:iid-Holder}. In Section \ref{ss.3} we formulate the result on H\"older continuity of the IDS for Anderson Model with ergodic background potential, see Theorem \ref{t.IDS}.


 In Section~\ref{s.proofs3}, we prove the main result, Theorem~\ref{t.main}.
 Section~\ref{s.proofs} begins with the proofs of~Theorems~\ref{t.main2} and~\ref{c:iid-Holder} (see Section~\ref{s.T2}). We then discuss a possible alternative approach in Section~\ref{ss.alt}, and present an example showing that the assumption of absence of invariant measure in Theorem~\ref{c:iid-Holder} cannot be removed (see Section~\ref{ss.example}). Finally, in Section \ref{s.backgroundAM} we prove Theorem~\ref{t.IDS}.

\section{Main results}\label{s.main}

Let $(X, \mu)$ be a measure space, and  $T:X\to X$ be an invertible ergodic transformation, preserving the  measure~$\mgr$. Assume that we are given a skew product map $F_E$ over this transformation with the circle as the fiber, continuously depending on parameter $E\in J$, where  $J\subset \R$ is a compact interval:
\[
F_E: X\times \Sc \to X\times \Sc, \quad  (x,y) \mapsto (T(x), f_{E,x}(y)),
\]
where all the maps $f_{E,x}: \Sc\to\Sc$ are orientation-preserving homeomorphisms of the circle. Each map $f_{E,x}$ can be lifted from the circle $\Sc=\R/\Z$ to its universal cover; such a lift is defined up to a shift by an integer. We choose (and fix) the lifts
\[
\tf_{E,x} : \R \to \R
\]
of the maps $f_{E,x}$ continuously in $E$ and measurably in $x$; for instance, for a given point $y_0=0$ and a given $E_0\in J$ we can require $\tf_{E_0,x}(y_0)\in [0,1)$, then extending to other parameter values by continuity. Denote by
\begin{equation}\label{eq:S-def}
S_{n; E,x}(y) = (\tf_{E,T^{n-1}(x)} \circ \dots \circ \tf_{E,T(x)} \circ \tf_{E,x})(\ty) - \ty
\end{equation}
the shift of the lift $\ty\in \R$ of the initial point $y\in \Sc$ after $n$ iterations of lifted maps, corresponding to the parameter~$E$.
We can now define the rotation  number:
\begin{prop}\label{p.existence}
For every $E$ there exists a constant $\rh(E)$ such that for $\mgr$-a.e. $x\in X$ and for every $y \in \Sc$ one has
\begin{equation}\label{eq:rho-def}
\frac{1}{n} S_{n; E,x}(y)  \to \rh(E), \quad n\to\infty.
\end{equation}
\end{prop}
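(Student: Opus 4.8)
The plan is to deduce Proposition~\ref{p.existence} from the subadditive ergodic theorem together with the standard bounded-distortion properties of lifts of circle homeomorphisms. First I would fix $E$ and record the key geometric fact: if $\tg:\R\to\R$ is the lift of an orientation-preserving circle homeomorphism, then $\tg$ commutes with the unit translation, $\tg(\ty+1)=\tg(\ty)+1$, so $\tg$ is monotone and moves every point by an amount that varies by less than $1$ across a fundamental domain. Consequently the full composition $G_{n;E,x}:=\tf_{E,T^{n-1}(x)}\circ\cdots\circ\tf_{E,x}$ also commutes with the unit translation, and hence for any two lifts $\ty,\ty'\in\R$ of points of $\Sc$ one has $|S_{n;E,x}(\ty)-S_{n;E,x}(\ty')|<1$. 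This is what will let us ignore the choice of $y$ in the end, so I would prove the convergence \eqref{eq:rho-def} for the single base point $\ty=0$ and then transfer it to all $y$.

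Next I would set up subadditivity. Define $a_n(x):=\sup_{\ty\in\R}\bigl(G_{n;E,x}(\ty)-\ty\bigr)$ and $b_n(x):=\inf_{\ty\in\R}\bigl(G_{n;E,x}(\ty)-\ty\bigr)$; by the previous paragraph $a_n(x)-b_n(x)<1$. Using $G_{n+m;E,x}=G_{m;E,T^n(x)}\circ G_{n;E,x}$ and the translation-invariance of the displacement function, one checks the subadditive cocycle relation $a_{n+m}(x)\le a_n(x)+a_m(T^n(x))$, and similarly $b_{n+m}(x)\ge b_n(x)+b_m(T^n(x))$ (superadditive). The functions $a_1,b_1$ are bounded (by compactness of $\Sc$ and continuity, $|G_{1;E,x}(\ty)-\ty|$ is uniformly bounded, e.g. by the normalization $\tf_{E_0,x}(0)\in[0,1)$ plus a uniform bound on $E\mapsto\tf_{E,x}$), hence integrable. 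Kingman's subadditive ergodic theorem then gives, for $\mgr$-a.e.\ $x$, convergence of $\tfrac1n a_n(x)$ to a $T$-invariant limit, which by ergodicity of $T$ is $\mgr$-a.s.\ a constant; call it $\rho(E)$. Applying the theorem to the superadditive sequence $b_n$ gives a constant limit as well, and since $0\le a_n(x)-b_n(x)<1$ the two limits coincide. Finally, for arbitrary $y\in\Sc$ with lift $\ty$, we have $b_n(x)\le S_{n;E,x}(y)\le a_n(x)$, so $\tfrac1n S_{n;E,x}(y)\to\rho(E)$ as well, for $\mgr$-a.e.\ $x$ and \emph{every} $y$. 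The a.e.\ set can be taken independent of $y$ precisely because $S_{n;E,x}(y)$ is sandwiched between $b_n(x)$ and $a_n(x)$.

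The only mildly delicate point — and the one I would treat carefully rather than as routine — is the measurability required to invoke Kingman's theorem: one must check that $x\mapsto a_n(x)$ and $x\mapsto b_n(x)$ are measurable. This follows because $\tf_{E,x}$ is chosen measurably in $x$ and, being continuous in $\ty$ and $1$-periodic in the displacement, the supremum and infimum over $\ty\in\R$ can be replaced by suprema/infima over the countable dense set $\mathbb{Q}\cap[0,1)$, which preserves measurability. Everything else — the subadditive inequalities, the uniform bound $a_n-b_n<1$, and the passage from the distinguished lift point to all of $\Sc$ — is elementary. I do not expect any genuine obstacle here; the statement is essentially the classical existence theorem for the fibered rotation number, and the proof is a direct application of Kingman's theorem once the cocycle structure and the bounded-displacement estimate are in place.
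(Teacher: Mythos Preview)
Your argument via Kingman's subadditive ergodic theorem is correct. The paper, however, does not actually give a proof: it simply attributes the result to the Birkhoff Ergodic Theorem, citing \cite[Proposition A.1]{GK} and \cite{LL}. The Birkhoff route works by passing to the skew product $F_E$ on $X\times\Sc$: the displacement $\psi(x,y):=\tf_{E,x}(\ty)-\ty$ is a well-defined function on $X\times\Sc$, and $S_{n;E,x}(y)$ is a genuine Birkhoff sum of $\psi$ along the $F_E$-orbit of $(x,y)$; one then takes any $F_E$-invariant measure projecting to $\mgr$, applies Birkhoff, and uses the very same oscillation bound $a_n-b_n<1$ that you use to see that the limit is independent of $y$, hence $T$-invariant, hence constant by ergodicity of~$\mgr$. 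Your Kingman approach has the advantage of working entirely on the ergodic base $(X,T,\mgr)$ without ever choosing an invariant measure for the (possibly non-ergodic) skew product; the Birkhoff approach uses only the more elementary ergodic theorem but at the cost of this extra step upstairs.

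One small technical point: your claim that $a_1,b_1$ are bounded is immediate only if the lifts are normalized at the particular parameter value $E$ under consideration (i.e.\ take $E_0=E$ in the paper's example of a normalization). The additional ``uniform bound on $E\mapsto\tf_{E,x}$'' you invoke to cover other $E$ is not part of the stated hypotheses. Since the proposition is proved one $E$ at a time and any renormalization of the lifts shifts $\rho(E)$ without affecting its existence, this is harmless---but the justification you wrote (``compactness of $\Sc$ and continuity'') is not the right one.
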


Proposition \ref{p.existence} is a consequence of the Birkhoff Ergodic Theorem, e.g. see \cite[Proposition A.1]{GK} or \cite{LL}. 

\begin{remark}
The exact value of the fibered rotation number does not have much meaning, since it depends on the choice of the lifts. Indeed, for each point $x\in X$ the lift $\tf_{E,x}$ of the map $f_{E,x}$ is defined up to an integer $k(x)$ that can depend on the point $x\in X$. For a different choice of lifts $\{\tf_{E,x}+k(x)\}$ the fibered rotation number will shift by $\int_X k(x)d\mu(x)$, which, for a measurable function $k(x)$, can take any value. One way to assign a specific value to the fibered rotation number would be to equip $X$ with some additional structure, e.g. to assume that $X$ is a compact metric space, and to require the lifts to depend continuously on the point $x\in X$ in the base, e.g. see \cite[Section 4.5]{AK} as an example of a setting  where this approach is convenient. Alternatively, and this is the approach we will use in this paper, one can notice that an increment of the fibered rotation number $\rho(E_2)-\rho(E_1)$ does not depend on the choice of the lifts, and therefore all the properties of the fibered rotation number that can be expressed only in terms of such increments (being constant on a given sub-interval of $J$, being Lipschitz or H\"older function, or having any other specific modulus of continuity, etc) are well defined.
\end{remark}

\subsection{Explicit formula for the fibered rotation number}\label{ss.1}

Note that for every $E$ there exists at least one invariant measure $\msp_E$ of the skew product $F_E$ that projects to $\mgr$ under the first coordinate projection $\pi:X\times \Sc\to X$, e.g.  see \cite[Theorem 1.5.10]{Ar}. 

Let $\msp_{E,x}$ be the fibered conditional measures (defined for $\mgr$-a.e. $x\in X$):
\[
\msp_E(A) = \int_X \msp_{E, x}(A \cap \pi^{-1}(\{x\})) \, d\mgr(x).
\]
Note that due to the invariance of $\nu$ (and invertibility of the base map $T$) we have
\[
(f_{E,x})_* \msp_{E, x} = \msp_{E, T(x)}
\]
for $\mgr$-a.e. $x\in X$.

Now, choose and fix any two parameter values $E_1,E_2\in J$ and any two invariant measures $\msp_{E_1}$, $\msp_{E_2}$ of the corresponding skew products $F_{E_1}, F_{E_2}$. Let $\msp_{E_1,x}$, $\msp_{E_2,x}$ be the corresponding conditional (probability) measures (defined for $\mgr$-a.e. $x\in X$). For a probability measure $\msp$ on the circle let $\widetilde{\msp}$ to be its (infinite total mass) lift on $\R$, and let for $a,b\in \R$
\begin{equation}\label{e.Phi}
\Phi_{\msp}(a,b):= \int_{\R} (\Ind_{[a,+\infty)} - \Ind_{[b,+\infty)}) \, d\widetilde{\msp} =
\begin{cases}
\widetilde{\msp} ([a,b)), & a< b \\
0, & a=b \\
-\widetilde{\msp} ([b,a)), & b<a
\end{cases}
\end{equation}
be the result of measuring the ``oriented half-interval'' from $a$ to $b$ with the lifted measure $\widetilde{\msp}$.
Finally, define
\begin{equation}\label{eq:g-def}
\shift_{E_1,E_2,x;\msp}(y):= \Phi_{\msp} (\tf_{E_1,x}(\ty),\tf_{E_2,x}(\ty)),
\end{equation}
where $\ty\in \R$ is a lift of $y\in \Sc$; it is easy to see that the right hand side of~\eqref{eq:g-def} does not depend on the choice of this lift.

Our first main result is then the following formula:
\begin{theorem}\label{t.main}
For any $E_1,E_2\in J$, and any corresponding invariant measures $\msp_{E_1},\msp_{E_2}$ we have
\begin{equation}\label{eq.main}
\rh(E_2)-\rh(E_1) = \int_{X\times \Sc} \shift_{E_1,E_2,x;\msp_{E_2,T(x)}}(y)\, d\msp_{E_1}(x,y),
\end{equation}
where $\shift_{E_1,E_2,x,\nu}(y)$ is given by~\eqref{eq:g-def}.
\end{theorem}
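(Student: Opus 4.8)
The plan is to convert \eqref{eq.main} into an \emph{exact} telescoping identity along one $F_{E_1}$-orbit, and then to average that identity over $\msp_{E_1}$ by the Birkhoff ergodic theorem. Everything rests on three elementary properties of the oriented measure $\Phi_{\msp}$ from \eqref{e.Phi}, valid for an arbitrary probability measure $\msp$ on $\Sc$ and all $a,b,c\in\R$: \emph{(additivity)} $\Phi_{\msp}(a,b)+\Phi_{\msp}(b,c)=\Phi_{\msp}(a,c)$, whence $\Phi_{\msp}(b,a)=-\Phi_{\msp}(a,b)$; \emph{(near-isometry)} $|\Phi_{\msp}(a,b)-(b-a)|\le 1$, because the lift $\widetilde{\msp}$ has mass exactly $1$ on every half-open interval of length $1$; and \emph{(equivariance)} if $g:\R\to\R$ is a lift of an orientation-preserving circle homeomorphism $\bar g$, then $\Phi_{\bar g_{*}\msp}(g(a),g(b))=\Phi_{\msp}(a,b)$, since the lift of $\bar g_{*}\msp$ equals $g_{*}\widetilde{\msp}$ and $g$ is an increasing bijection of~$\R$.

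Fix $(x,y)$ in the full-$\msp_{E_1}$-measure set on which Proposition~\ref{p.existence} holds for both $E_1$ and $E_2$ and along whose $T$-orbit $(f_{E_2,x})_{*}\msp_{E_2,x}=\msp_{E_2,T(x)}$ at every step (possible since $\msp_{E_1}$ projects to $\mgr$). Write $x_k=T^k(x)$, and for $i=1,2$ let $\tilde y^{(i)}_k\in\R$ be the lift of the circle coordinate of $F_{E_i}^{k}(x,y)$ obtained from a common $\tilde y^{(1)}_0=\tilde y^{(2)}_0=\ty$ by $\tilde y^{(i)}_{k+1}=\tf_{E_i,x_k}(\tilde y^{(i)}_k)$; by \eqref{eq:S-def} and Proposition~\ref{p.existence}, $\tilde y^{(i)}_n-\ty=S_{n;E_i,x}(y)=n\,\rh(E_i)+o(n)$. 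Put $\Theta_k:=\Phi_{\msp_{E_2,x_k}}(\tilde y^{(2)}_k,\tilde y^{(1)}_k)$, so $\Theta_0=0$. Applying \emph{equivariance} with $g=\tf_{E_2,x_k}$ (which pushes $\msp_{E_2,x_k}$ to $\msp_{E_2,x_{k+1}}$) and then \emph{additivity},
\[
\Theta_k=\Phi_{\msp_{E_2,x_{k+1}}}\bigl(\tilde y^{(2)}_{k+1},\tf_{E_2,x_k}(\tilde y^{(1)}_k)\bigr),\qquad
\Theta_{k+1}-\Theta_k=\Phi_{\msp_{E_2,x_{k+1}}}\bigl(\tf_{E_2,x_k}(\tilde y^{(1)}_k),\tf_{E_1,x_k}(\tilde y^{(1)}_k)\bigr),
\]
and by \eqref{eq:g-def} the last expression equals $-\shift_{E_1,E_2,x_k;\msp_{E_2,T(x_k)}}(y_k)$, where $y_k$ is the circle coordinate of $F_{E_1}^{k}(x,y)$. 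Summing telescopically yields the exact identity
\[
\sum_{k=0}^{n-1}\shift_{E_1,E_2,x_k;\msp_{E_2,T(x_k)}}(y_k)=-\,\Theta_n .
\]

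By \emph{near-isometry}, $|\Theta_n-(\tilde y^{(1)}_n-\tilde y^{(2)}_n)|\le 1$, so dividing the identity by $n$ and using $\tilde y^{(1)}_n-\tilde y^{(2)}_n=n(\rh(E_1)-\rh(E_2))+o(n)$ gives $\tfrac1n\sum_{k=0}^{n-1}\shift_{E_1,E_2,x_k;\msp_{E_2,T(x_k)}}(y_k)\to\rh(E_2)-\rh(E_1)$ for $\msp_{E_1}$-a.e.\ $(x,y)$. The integrand $\shift_{E_1,E_2,x;\msp_{E_2,T(x)}}(y)$ differs from $\tf_{E_2,x}(\ty)-\tf_{E_1,x}(\ty)$ by at most $1$, hence lies in $L^1(\msp_{E_1})$ under the standing integrability of the displacement cocycle. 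Integrating the exact identity against $\msp_{E_1}$, using $F_{E_1}$-invariance of $\msp_{E_1}$ on the left, dividing by $n$, and letting $n\to\infty$ (the averages $-\Theta_n/n$ are Birkhoff averages of an $L^1$ function, hence uniformly integrable) then yields $\int_{X\times\Sc}\shift_{E_1,E_2,x;\msp_{E_2,T(x)}}(y)\,d\msp_{E_1}=\rh(E_2)-\rh(E_1)$, which is \eqref{eq.main}.

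I expect the real obstacle to be the choice of the auxiliary quantity $\Theta_k$: one must compare the two \emph{genuine} orbits issued from the same point and measure their discrepancy with the fiber measures $\msp_{E_2,\cdot}$ of the \emph{other} parameter, so that \emph{equivariance} yields the exact per-step cancellation $\Theta_{k+1}-\Theta_k=-\shift_{E_1,E_2,x_k;\msp_{E_2,T(x_k)}}(y_k)$. Once this is found, the analytic points — integrability of the integrand and the passage from the pointwise Ces\`aro limit to the integral — are routine.
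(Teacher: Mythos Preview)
Your argument is correct. The three properties of $\Phi_{\msp}$ you isolate (additivity, near-isometry, equivariance) are exactly the right tools, and the telescoping identity $\sum_{k=0}^{n-1}\shift(F_{E_1}^k(x,y))=-\Theta_n$ is the heart of the matter. The passage to the integral is clean once you note that $\shift\in L^1(\msp_{E_1})$ and that Birkhoff averages converge in $L^1$; since you have already shown the pointwise limit is the constant $\rh(E_2)-\rh(E_1)$ (via near-isometry and Proposition~\ref{p.existence}), ergodicity of $\msp_{E_1}$ is not needed.

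The paper organizes the same ingredients differently. It introduces a \emph{translation value} $\mathcal{T}(\tF;\msp_1,\msp_2)$, defined by lifting the conditional measures $\msp_{1,x}$ to $\R$ and measuring their displacement under $\tF$ with $\msp_2$, and proves an intermediate result (Theorem~\ref{p:formula}): if either $\msp_1$ or $\msp_2$ is $F$-invariant, then $\mathcal{T}(\tF;\msp_1,\msp_2)=\rh(\tF)$. This is obtained from a cocycle relation for $\mathcal{T}$ (the integrated analogue of your telescoping) together with $\frac{1}{n}\mathcal{T}(\tF^n;\cdot,\cdot)\to\rh$. Theorem~\ref{t.main} then follows by applying Theorem~\ref{p:formula} twice, to $\tF_{E_1}$ and $\tF_{E_2}$, and subtracting. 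Your route is more direct and orbit-based: you couple the two genuine orbits from a common initial point and use $\msp_{E_2}$-invariance \emph{pointwise} (equivariance in the $\Theta_k$ recursion) and $\msp_{E_1}$-invariance \emph{globally} (the integration step). What the paper's approach buys is the standalone Theorem~\ref{p:formula}, an exact formula for $\rh(\tF)$ in terms of any pair of measures one of which is invariant, which may be of independent interest; what yours buys is economy---no auxiliary measure-level construction, just a transparent telescoping along one orbit.
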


\begin{figure}[!ht]
\includegraphics[scale=0.8]{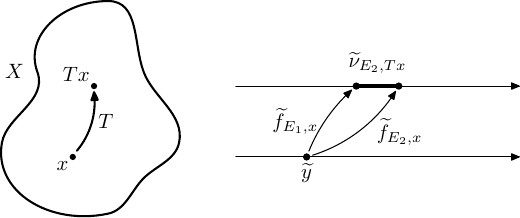}
\caption{The expression under the integral in~\eqref{eq.main}: the (signed)  $\widetilde{\msp}_{E_2,Tx}$-measure of the half-interval drawn in bold.}\label{f:shift}
\end{figure}

%
%

\subsection{The rotation number for random dynamical system on the circle}\label{ss.2}

Random dynamical systems on the circle can have highly non-trivial dynamics, e.g. see \cite{A, KN, GS, LL}. The statement of Theorem \ref{t.main} applied to a one-parameter  i.i.d. random dynamical systems on the circle (that can be seen as a one parameter family of skew products over the Bernoulli shift) can be essentially simplified.  Namely, assume that $g_{E,\omega}$ are orientation-preserving circle homeomorphisms, depending continuously on the parameter $E\in J \subset \R$ and measurably on $\omega \in \Omega$, where $(\Omega, \Prob)$ is some probability space. For every $E\in J$, choose a stationary measure $\nu_E^+$, as well as a stationary measure~$\nu_E^-$ for the inverse maps:
\[
\nu_E^+ = \E (g_{E,\omega})_* \nu_E^+, \quad \nu_E^- = \E (g_{E,\omega}^{-1})_* \nu_E^-.
\]
Notice that the choice of the stationary measures $\nu_E^+$ and $\nu_E^-$ in general is not unique. The result below is independent of this choice.

\begin{theorem}\label{t.main2}
For any $E_1,E_2\in J$, and any corresponding stationary measures  $\nu_{E_1}^+$ and $\nu_{E_2}^-$ we have
\[
\rho(E_2)-\rho(E_1)
= \E \int_{\Sc} \Phi_{\nu_{E_2}^{-}} (\widetilde{g}_{E_1,\omega}(y),\widetilde{g}_{E_2,\omega}(y)) \, d\nu_{E_1}^+(y),
\]
where $\Phi_{\msp}$ is given by~\eqref{e.Phi}.
\end{theorem}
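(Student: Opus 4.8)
The plan is to realize the i.i.d.\ random system as a skew product over the two-sided Bernoulli shift, apply Theorem~\ref{t.main} to a well-chosen pair of invariant measures, and then collapse the resulting integral using the independence structure of the product measure. Concretely, I would take $X=\Omega^{\Z}$, $\mgr=\Prob^{\otimes\Z}$, and let $T$ be the shift $(T\underline\omega)_k=\omega_{k+1}$, an invertible ergodic transformation; setting $f_{E,\underline\omega}:=g_{E,\omega_0}$ turns $F_E$ into a skew product of the form in Theorem~\ref{t.main}, with $\tf_{E,\underline\omega}:=\tg_{E,\omega_0}$ a legitimate choice of lifts. Since the cocycle~\eqref{eq:S-def} at $\underline\omega$ depends only on $\omega_0,\dots,\omega_{n-1}$, the fibered rotation number of this skew product coincides with the rotation number $\rho(E)$ of the original random system, so it suffices to compute the right-hand side of~\eqref{eq.main} for a convenient pair of invariant measures $\nu_{E_1},\nu_{E_2}$ of $F_{E_1},F_{E_2}$.

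The key step is the classical correspondence between stationary measures and fiberwise conditional measures, which I would establish by a martingale argument. From the forward-stationary $\nu_{E_1}^{+}$, define $\nu_{E_1}$ through its conditional measures $\nu_{E_1,\underline\omega}:=\lim_{n\to\infty}(g_{E_1,\omega_{-1}}\circ\cdots\circ g_{E_1,\omega_{-n}})_*\nu_{E_1}^{+}$; this limit exists a.s.\ because the sequence is a bounded martingale relative to the $\sigma$-algebras $\sigma(\omega_{-1},\dots,\omega_{-n})$, by stationarity of $\nu_{E_1}^{+}$. Then $\nu_{E_1,\underline\omega}$ is measurable with respect to the past $\sigma(\omega_k:k<0)$, the invariance $(g_{E_1,\omega_0})_*\nu_{E_1,\underline\omega}=\nu_{E_1,T\underline\omega}$ is immediate from the definition and the continuity of pushforward, and $\E\,\nu_{E_1,\underline\omega}$ equals the martingale's common expectation $\nu_{E_1}^{+}$ (pass to the limit against continuous test functions). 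Symmetrically, from the backward-stationary $\nu_{E_2}^{-}$, using the maps $g_{E_2,\omega_0}^{-1},g_{E_2,\omega_1}^{-1},\dots$, I would build an $F_{E_2}$-invariant $\nu_{E_2}$ whose conditional $\nu_{E_2,\underline\omega}$ is measurable with respect to the future $\sigma(\omega_k:k\ge0)$ with $\E\,\nu_{E_2,\underline\omega}=\nu_{E_2}^{-}$; consequently $\nu_{E_2,T\underline\omega}$ is $\sigma(\omega_k:k\ge1)$-measurable. (These are standard constructions; see e.g.\ \cite{Ar}.)

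With these choices, the integrand of~\eqref{eq.main} is $\Phi_{\nu_{E_2,T\underline\omega}}\!\bigl(\tg_{E_1,\omega_0}(\ty),\tg_{E_2,\omega_0}(\ty)\bigr)$, integrated against $d\nu_{E_1,\underline\omega}(y)\,d\mgr(\underline\omega)$, and it collapses in two conditioning steps. Since $\nu_{E_1,\underline\omega}$ depends only on $(\omega_k)_{k<0}$, whereas the integrand, as a function of $y$, depends on $\underline\omega$ only through $\omega_0$ and $(\omega_k)_{k\ge1}$, conditioning on $\sigma(\omega_k:k\ge0)$ replaces $d\nu_{E_1,\underline\omega}(y)$ by $d(\E\,\nu_{E_1,\underline\omega})(y)=d\nu_{E_1}^{+}(y)$. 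Next, since $\nu_{E_2,T\underline\omega}$ is independent of $\omega_0$ while $a\mapsto\Phi_{\nu}(a,b)$ and the dependence on $\nu$ are linear, conditioning on $\sigma(\omega_0)$, applying Fubini, and pulling the expectation through the half-interval functional replaces $\Phi_{\nu_{E_2,T\underline\omega}}$ by $\Phi_{\E\,\nu_{E_2,T\underline\omega}}=\Phi_{\nu_{E_2}^{-}}$ (using $T$-invariance of $\mgr$ for the last equality). What remains is exactly $\E_{\omega}\int_{\Sc}\Phi_{\nu_{E_2}^{-}}\bigl(\tg_{E_1,\omega}(y),\tg_{E_2,\omega}(y)\bigr)\,d\nu_{E_1}^{+}(y)$, the asserted formula; independence of the particular stationary measures chosen is then automatic, because the left-hand side $\rho(E_2)-\rho(E_1)$ depends on no choices.

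The hard part will be the correspondence step and the bookkeeping around it: verifying that the conditional measures are genuinely past- (resp.\ future-) measurable, that their expectations are the prescribed stationary measures, and that the conditioning manipulations are legitimate — integrability of the integrand is inherited from Theorem~\ref{t.main}, and $\Phi_{\nu}$ is uniformly bounded on the range of arguments that occurs. By contrast, the linearity of $\Phi_{\nu}$ in $\nu$ and the independence of disjoint coordinate blocks under $\Prob^{\otimes\Z}$ are routine.
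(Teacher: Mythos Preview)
Your proposal is correct and follows essentially the same route as the paper: realize the random system as a skew product over the two-sided Bernoulli shift, invoke the forward/backward Markov measure correspondence (which the paper simply cites from~\cite{Ar} rather than sketching the martingale construction) to obtain $F_{E_1}$- and $F_{E_2}$-invariant measures whose fiber conditionals are past- and future-measurable respectively, apply Theorem~\ref{t.main}, and then use the independence of past, present coordinate, and future under $\Prob^{\otimes\Z}$ together with the linearity of $\Phi_\nu$ in $\nu$ to collapse the integral. The paper's computation is terser but identical in substance; your two-step conditioning is exactly its integration over $x^-$ and then $x^+$.
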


To simplify the statement of Theorem \ref{t.main2}, assume further that for any fixed $\omega\in \Omega, \, y\in \Sc$ the function $g_{\cdot, \omega}(y)$ of the parameter $E\in J$ is monotone increasing and its image is a proper sub-arc of the circle. Then, we have the following formula:

\begin{coro}\label{c:iid}
In the setting above, for any $E_1<E_2$ one has
\[
\rho(E_2)-\rho(E_1)
= \E \int_{\Sc} \nu_{E_2}^{-} (I_{E_1,E_2;\omega}(y)) \, d\nu_{E_1}^+(y)
= \E (\nu_{E_1}^+ \times \nu_{E_2}^{-})  (K_{E_1,E_2;\omega}),
\]
where $I_{E_1,E_2;\omega}(y)=[g_{E_1,\omega}(y),g_{E_2,\omega}(y))$ and $K_{E_1,E_2;\omega}$ is the projection from~$\R^2$ on~$(\Sc)^2$ of the set
\[
\widetilde{K}_{E_1,E_2;\omega} :=\{(y,z) \mid \widetilde{g}_{E_1,\omega}(y)\le z <\widetilde{g}_{E_2,\omega}(y) \}.
\]
\end{coro}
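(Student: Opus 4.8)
The plan is to obtain Corollary \ref{c:iid} as a direct specialization of Theorem \ref{t.main2}; essentially the only work is to rewrite the integrand $\Phi_{\nu_{E_2}^{-}}(\tg_{E_1,\omega}(y),\tg_{E_2,\omega}(y))$ under the added monotonicity hypothesis. Fix $E_1<E_2$ in $J$, $\omega\in\Omega$ and $y\in\Sc$, and let $\tg_{E,\omega}$ denote the lifts chosen continuously in $E$. First I would record the elementary consequence of the hypotheses that
\[
0\le \tg_{E_2,\omega}(y)-\tg_{E_1,\omega}(y)<1 .
\]
Indeed, since $E\mapsto g_{E,\omega}(y)$ is monotone increasing on the circle with image a proper sub-arc, the continuous lift $E\mapsto\tg_{E,\omega}(y)$ is non-decreasing, which gives the left inequality; and since the image of $E\mapsto g_{E,\omega}(y)$ over the whole of $J$ is a proper sub-arc, the total increment of $\tg_{\cdot,\omega}(y)$ over $J$, hence also over $[E_1,E_2]$, is strictly less than $1$, which gives the right inequality.

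Next I would evaluate $\Phi_{\nu_{E_2}^{-}}$ on this pair. By the displayed inequality and the definition~\eqref{e.Phi},
\[
\Phi_{\nu_{E_2}^{-}}(\tg_{E_1,\omega}(y),\tg_{E_2,\omega}(y))=\widetilde{\nu}_{E_2}^{-}([\,\tg_{E_1,\omega}(y),\tg_{E_2,\omega}(y)\,)),
\]
with both sides equal to $0$ when the two endpoints coincide. Since this half-open interval of $\R$ has length strictly less than $1$, the covering projection $\R\to\Sc$ restricts to a bijection from it onto the positively oriented arc $I_{E_1,E_2;\omega}(y)=[g_{E_1,\omega}(y),g_{E_2,\omega}(y))$, and because $\widetilde{\nu}_{E_2}^{-}$ is the $\Z$-periodic lift of $\nu_{E_2}^{-}$ this interval carries $\widetilde{\nu}_{E_2}^{-}$-mass exactly $\nu_{E_2}^{-}(I_{E_1,E_2;\omega}(y))$. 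Substituting into Theorem~\ref{t.main2} yields the first equality of the corollary.

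For the second equality I would unfold the inner circle measure as an integral and apply Tonelli's theorem. Write $\nu_{E_2}^{-}(I_{E_1,E_2;\omega}(y))=\int_{\Sc}\Ind_{I_{E_1,E_2;\omega}(y)}(z)\,d\nu_{E_2}^{-}(z)$. The bijection above shows that, for $(y,z)\in(\Sc)^2$, one has $z\in I_{E_1,E_2;\omega}(y)$ precisely when some (equivalently, exactly one) choice of lifts $(\ty,\widetilde{z})$ satisfies $\tg_{E_1,\omega}(\ty)\le\widetilde{z}<\tg_{E_2,\omega}(\ty)$, i.e.\ precisely when $(y,z)\in K_{E_1,E_2;\omega}$, the projection to $(\Sc)^2$ of $\widetilde{K}_{E_1,E_2;\omega}$; hence $\Ind_{I_{E_1,E_2;\omega}(y)}(z)=\Ind_{K_{E_1,E_2;\omega}}(y,z)$. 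All integrands being non-negative, Tonelli's theorem gives
\[
\int_{\Sc}\nu_{E_2}^{-}(I_{E_1,E_2;\omega}(y))\,d\nu_{E_1}^{+}(y)=(\nu_{E_1}^{+}\times\nu_{E_2}^{-})(K_{E_1,E_2;\omega}),
\]
and taking the expectation over $\omega$ concludes.

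The only point requiring genuine care is measurability: to apply Tonelli, and to make sense of the expectation of the right-hand side, I need $(\omega,y,z)\mapsto\Ind_{K_{E_1,E_2;\omega}}(y,z)$ to be jointly measurable on $\Omega\times(\Sc)^2$, which follows from the measurable dependence of $g_{E,\omega}$ on $\omega$ together with the continuity of the circle maps in the remaining variables. Everything else is bookkeeping of lifts, in which the assumption that the image is a proper sub-arc enters exactly to guarantee that the interval $[\,\tg_{E_1,\omega}(y),\tg_{E_2,\omega}(y)\,)$ has length below $1$, so that pushing it down to the circle neither collapses nor double-counts mass. I do not expect any of this to be a real obstacle; the statement is in essence a reformulation of Theorem~\ref{t.main2}.
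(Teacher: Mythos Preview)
Your proposal is correct and matches the paper's approach: the paper does not give a separate proof of this corollary, treating it as an immediate specialization of Theorem~\ref{t.main2} under the monotonicity hypothesis, and your argument is exactly the natural verification of this---reducing $\Phi_{\nu_{E_2}^-}(\tg_{E_1,\omega}(y),\tg_{E_2,\omega}(y))$ to $\nu_{E_2}^-(I_{E_1,E_2;\omega}(y))$ via the sub-arc assumption and then applying Fubini for the $K$-set formulation.
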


\begin{figure}[!ht]
\includegraphics{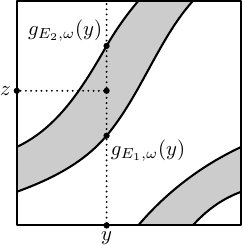}
\caption{Set $K_{E_1,E_2;\omega}$ and a point $(y,z)$ inside it.}
\end{figure}


Theorem \ref{t.main2} allows to estimate  the regularity of the fibered rotation number as a function of the parameter.

\begin{theorem}\label{c:iid-Holder}
Assume that in the setting of Theorem \ref{t.main2} or Corollary~\ref{c:iid}   the maps $g_{E,\omega}$ are diffeomorphisms of the circle that additionally satisfy the following properties:
\begin{itemize}
\item For any $E\in J$ there is no measure $\msp$ on the circle that would be $\Prob$-almost surely $g_{E,\omega}$-invariant.
\item There exists a constant $C_0>0$ such that for all $\omega\in \Omega$, $y\in S^1$, and $E\in J$ we have
$$
\left|\frac{\partial \tg_{E, \omega}(y)}{\partial E}\right|\le C_0.
$$
\item There are constants $C_1>0$ and $\gamma>0$ such that for every $E\in J$ we have
$$\int_{\Omega} (\mathfrak{L}(g_{E,\omega}))^{\gamma}  d\P(\omega)<C_1, $$
where
$$
\mathfrak{L}(g_{E,\omega})=\max_{y\in S^1}\max\left(|g'_{E, \omega}(y)|,  |(g^{-1}_{E, \omega})'(y)|\right).
$$
\end{itemize}
Then the fibered rotation number $\rho(E)$ is H\"older continuous: there exist $C,\alpha>0$ such that for any $E_1,E_2\in J$
\[
|\rho(E_2)-\rho(E_1)| \le C |E_2-E_1|^{\alpha}.
\]
\end{theorem}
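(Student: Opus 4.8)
The plan is to bound the right-hand side of the formula in Theorem~\ref{t.main2}, namely
\[
\rho(E_2)-\rho(E_1) = \E \int_{\Sc} \Phi_{\nu_{E_2}^{-}} (\widetilde{g}_{E_1,\omega}(y),\widetilde{g}_{E_2,\omega}(y)) \, d\nu_{E_1}^+(y),
\]
using the fact that $|\Phi_{\nu_{E_2}^-}(a,b)| = \widetilde{\nu}_{E_2}^-([a\wedge b, a\vee b))$ is the measure of a short interval whose length, by the uniform derivative bound $|\partial_E \widetilde{g}_{E,\omega}(y)| \le C_0$, is at most $C_0|E_2-E_1|$. So the key point is a quantitative modulus-of-continuity estimate for the stationary measure $\nu_{E_2}^-$: one needs
\[
\sup_{I : |I| \le \delta} \nu_{E_2}^-(I) \le C \delta^\alpha
\]
for some $C,\alpha>0$ uniform in $E_2 \in J$, after which the theorem follows by writing $\delta = C_0|E_2-E_1|$ and integrating. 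First I would record this reduction precisely; then the bulk of the argument is establishing the H\"older regularity of the stationary measures.

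For the regularity of $\nu_E^-$, the natural tool is the contraction/expansion dichotomy for random circle diffeomorphisms. The first hypothesis --- absence of a $\Prob$-a.s.\ $g_{E,\omega}$-invariant measure (equivalently here, for the inverse system) --- together with the Antonov/Kleptsyn--Nalskij type theory (see the cited \cite{KN}, \cite{LL}, \cite{GS}) puts us in the ``proximal'' regime: for $\Prob^{\N}$-a.e.\ sequence $\omega=(\omega_1,\omega_2,\dots)$ the compositions $g_{E,\omega_n}^{-1}\circ\cdots\circ g_{E,\omega_1}^{-1}$ contract the circle, the contraction being exponential in the sense that the image of a fixed arc has length decaying like $e^{-\lambda n}$ with $\lambda>0$ the relevant (negative) drift exponent. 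The role of the third hypothesis, the uniform exponential-moment bound $\int (\mathfrak L(g_{E,\omega}))^\gamma\,d\Prob < C_1$, is precisely to make the contraction rate, and the large-deviation control around it, uniform in $E\in J$: it gives a Furstenberg-type Lyapunov exponent $\lambda(E)$ depending semicontinuously on $E$, bounded away from $0$ on the compact interval $J$, with uniform exponential tails for the deviations of $\frac1n\log(\text{length of the image arc})$ from $-\lambda(E)n$. This is the standard mechanism (cf.\ \cite{GKM}, \cite{M}, and in the spectral setting \cite{L}, \cite{CKM}) by which a stationary measure acquires a H\"older exponent: writing $\nu_E^-$ via its disintegration along backward compositions, the measure of an arc $I$ of length $\delta$ is controlled by the probability that a backward composition needs roughly $n \asymp \frac1\lambda\log(1/\delta)$ steps to shrink a reference arc below size $\delta$, and the uniform large-deviation estimate turns this into $\nu_E^-(I) \le C\delta^\alpha$ with $\alpha$ a fixed fraction of $\lambda/(\log\mathfrak L)$-type ratio, uniform over $E\in J$.

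Concretely, the steps I would carry out are: (i) state the reduction above, so that everything rests on a uniform H\"older bound for $\{\nu_E^-\}_{E\in J}$; (ii) use the first hypothesis to invoke the structural result that the backward random dynamics is contracting (no invariant measure $\Rightarrow$ synchronization), and fix a deterministic reference arc; (iii) use the third hypothesis (exponential moments of $\mathfrak L$) to get, uniformly in $E\in J$, a positive lower bound on the contraction drift and a uniform exponential large-deviation bound for the time it takes to contract the reference arc below a given scale --- here one should be slightly careful to phrase ``time to reach scale $\delta$'' rather than ``drift at time $n$'', which is a routine but necessary translation via a stopping-time / Borel--Cantelli argument; (iv) combine (ii)--(iii) to conclude $\sup_{|I|\le\delta}\nu_E^-(I)\le C\delta^\alpha$ with $C,\alpha$ independent of $E$; (v) plug into (i). The main obstacle is step (iii)--(iv): making the H\"older exponent and constant genuinely \emph{uniform} in the parameter $E$ over the compact interval $J$. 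This requires the large-deviation estimate to hold with constants controlled only through $C_0,C_1,\gamma$ and $\diam J$, not through $E$-dependent quantities; the uniform exponential-moment hypothesis is exactly what is needed, but assembling the uniform estimate (e.g.\ via a uniform bound on a pressure/moment-generating function $E\mapsto \int \mathfrak L(g_{E,\omega})^s\,d\Prob$ for $s$ in a fixed small range, plus compactness and continuity in $E$) is the technical heart of the proof.
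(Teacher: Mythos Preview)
Your proposal is correct and follows essentially the same approach as the paper: reduce via Theorem~\ref{t.main2} and the derivative bound $|\partial_E \tg_{E,\omega}|\le C_0$ to a uniform H\"older estimate $\nu_{E_2}^-(I)\le C'|I|^\alpha$ for the backward stationary measures, then plug in. The only difference is that the paper dispatches the H\"older regularity in one line by invoking Theorem~1.1 of~\cite{GKM} (whose hypotheses are precisely the no-common-invariant-measure and finite-moment conditions assumed here), whereas you sketch the contraction/large-deviation mechanism that underlies that cited result; your steps (ii)--(iv) are thus a rough outline of what \cite{GKM} proves, and the uniformity-in-$E$ concern you flag is handled there.
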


The proof of this statement is a straightforward combination of Theorem \ref{t.main2} and the results on H\"older continuity of the stationary measures from \cite{GKM}; it is presented in Section \ref{s.proofs}.

\begin{remark}\label{r.AMHolder}
Theorem \ref{c:iid-Holder} can be considered as a non-linear analog of a well known statement from spectral theory of discrete Schr\"odinger operators. Namely, consider a family of random Schr\"odinger operators $H:\ell^2(\Z)\to \ell^2(\Z)$,
$$
[H \psi](n) = \psi(n+1) + \psi(n-1) + W(n) \psi(n),
$$
where $\{W(n)\}$ is a sequence of iid random variables distributed with respect to a probability measure $\xi$ on $\R$ that have some finite moments, i.e.  for some $\gamma>0$ we have $\int_{\mathbb{R}}|x|^\gamma d\xi<\infty$. This family of operators is known as 1D Anderson Model, and it was heavily studied,  see \cite[Chapter 5]{DF2}, \cite{AW}, and references therein.  One of the spectral characteristics of an ergodic family of Shr\"odinger operators, besides spectrum and spectral measures, is the integrated density of states (IDS). The IDS is the distribution function of the density of states measure $N_W$, which is the limiting distribution of the eigenvalues of restrictions $H|_{[1,L]}$ of the operator to finite intervals (which explains the terminology), with Dirichlet boundary condition:
$$
N_W(a,b) = \lim_{L \to \infty} \frac{1}{L} \# \big\{ \text{eigenvalues of } H|_{[1,L]} \text{ that lie in } (a,b) \big\},
$$
for any interval $(a,b)\subseteq \R$. See \cite{DF1} for different equivalent definitions of the IDS and a detailed discussion of its properties.
It is well known that the IDS in the Anderson Model must be (locally) H\"older continuous, see \cite{L}, \cite{CKM}, \cite{Bo}, \cite{SVW}, \cite{HV}. For a closely related statement on H\"older continuity of Lyapunov exponents in Anderson Model or, more generally, random projective cocycles,  see \cite{DGr1, DGr2, L2}. At the same time, if one considers a projectivized Schr\"odinger cocycle that corresponds to this family of ergodic Schr\"odinger operators parameterized by the energy $E$ (see \cite{DF1} for a definition of a Schr\"odinger cocycle and results that connect its dynamics with spectral properties of the operator), then a natural choice of lifts lead to the following relation between the rotation number of the cocycle and the IDS:
$$
\mathrm{IDS}(E)=1-2\rho(E),
$$
e.g. see \cite[Theorem 1.6]{DS}.
Therefore, H\"older continuity of the IDS in 1D Anderson Model can be considered as a partial case of Theorem \ref{c:iid-Holder}.
\end{remark}

\begin{remark}
In Theorem \ref{c:iid-Holder}, smoothness of the maps and smooth dependence on the parameter are  used only to extract Lipschitz properties of those maps. One can rewrite the statement of  Theorem \ref{c:iid-Holder} requesting only that the maps $g_{E, \omega}$ are Lipschitz in the parameter $E$ with uniform Lipschitz constant, and that the moment condition holds for ${\mathfrak{L}}(g_{E, \omega})=\max(\text{\rm Lip}(g_{E, \omega}), \text{\rm Lip}(g_{E, \omega}^{-1})).$
\end{remark}

\begin{remark}
The condition on absence of invariant measures in  Theorem \ref{c:iid-Holder} cannot be removed. We provide an explicit example to demonstrate it in Section \ref{ss.example}.
\end{remark}

\begin{remark}
It is interesting to observe similarities between the regularity results on Lyapunov exponents and the fibered rotation number. In the case of Schr\"odinger cocycles those similarities can be explained by the so called Thouless formula \cite{T, AS, CS1}. In the case of more general projective cocycles of special form a generalized Thouless formula \cite{BCDFK} is also available. We are not aware of a simple non-linear analog of Thouless formula that would explain, for example, the similarities between \cite{GK4} and \cite{TV}.
\end{remark}

\begin{remark}\label{r.DM}
After this work was completed, we learned about the recent preprint \cite{BM}; Proposition V from there claims that a stationary measure of a mostly contracting random map depends H\"older continuously on a parameter. This should allow to provide an alternative proof of Theorem \ref{c:iid-Holder} in this case.
\end{remark}

\subsection{Regularity of the IDS of the Anderson Model with ergodic background potential}\label{ss.3}

 The formula provided by Theorem \ref{t.main} can be used to extend the results on H\"older continuity of the IDS in Anderson Model, as discussed in Remark \ref{r.AMHolder}  above, to a larger class of ergodic Schr\"odinger operators with iid random noise.

Given a compact metric space $M$, a homeomorphism $G : M \to M$, an ergodic Borel probability measure $m$ with full topological support, $\supp m = M$, and a sampling function $\varphi \in C(X,\R)$, we generate potentials
$$
V_x(n) = \varphi(G^n x), \; x \in M, \; n \in \Z
$$
and Schr\"odinger operators
$$
[H_x \psi](n) = \psi(n+1) + \psi(n-1) + V_x(n) \psi(n)
$$
on $\ell^2 (\Z)$. The spectrum of $H_x$, denoted by $\sigma(H_x)$, is almost surely independent of~$x$, e.g. see \cite{Pa} or \cite{DF1}. That is, there is a compact set $\Sigma_0$ such that
\begin{equation}
\Sigma_0 = \sigma(H_x) \text{ for $\mu$-almost every } x \in X.
\end{equation}
The random perturbation is given by
$$
W_\omega(n) = {\omega}_n, \; \bar\omega \in \Omega^\Z, \; n \in \Z,
$$
where $\Omega = \supp \xi$ and $\xi$ is a 
probability measure on $\R$ with topological support
satisfying
$
\# \Omega \ge 2.
$
We will also assume that $\xi$ has finite exponential moment, i.e. for some $\gamma>0$ we have $\int_{\mathbb{R}}|x|^\gamma d\xi<\infty$.
\begin{theorem}\label{t.IDS}
In the setting above, the integrated density of states of the family of operators $\{H_x+W_\omega\}_{x\in X, \omega\in \Omega}$ is H\"older continuous on any compact interval $J\subseteq\R$ of energies.
\end{theorem}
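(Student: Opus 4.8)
The plan is to express the integrated density of states through the fibered rotation number of the associated projective Schr\"odinger cocycle, apply Theorem~\ref{t.main}, and then reduce H\"older continuity to a uniform H\"older regularity estimate for the stationary measures of that cocycle, which the non-degeneracy of the i.i.d.\ noise and the finite moment of $\xi$ will supply. The operators $H_x+W_{\bar\omega}$ are Schr\"odinger operators with potential $n\mapsto\varphi(G^nx)+\omega_n$, a continuous function of the point $(x,\bar\omega)$ of the hull $\bK:=M\times\Omega^{\mathbb Z}$, carried onto itself by $\widehat G:=G\times\sigma$ ($\sigma$ the left shift on $\Omega^{\mathbb Z}$); the measure $\widehat m:=m\times\Prob$ with $\Prob:=\xi^{\otimes\mathbb Z}$ is $\widehat G$-invariant and, the Bernoulli shift being mixing, ergodic. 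Let $F_E$ be the one-parameter family of skew products over $(\bK,\widehat G)$ with fiber $\RP^1\cong\Sc$ whose fiber maps are the M\"obius actions of the Schr\"odinger transfer matrices $\mathbf A_E(x,\bar\omega)=\left(\begin{smallmatrix}E-\varphi(x)-\omega_0 & -1\\ 1 & 0\end{smallmatrix}\right)$, with the standard lift; its fibered rotation number $\rho(E)$ satisfies $\mathrm{IDS}(E)=1-2\rho(E)$ up to the choice of lift (as recalled in Remark~\ref{r.AMHolder}), so it suffices to bound the increments $|\rho(E_2)-\rho(E_1)|$, $E_1,E_2\in J$. A useful observation is that $F_E$ is the skew product attached to the \emph{i.i.d.}\ random dynamical system $\Psi_{E,\omega_0}(x,y)=(Gx,[\mathbf A_E(x,\bar\omega)]\,y)$ on the $\RP^1$-bundle $M\times\RP^1$ over $M$, driven by $\omega_0\sim\xi$ (it depends on $\bar\omega$ only through $\omega_0$).

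\emph{The increment formula.} Since $\#\Omega\ge2$, two admissible values of $\omega_0$ give matrices with no common invariant direction or pair of directions (their quotient is a non-trivial parabolic); as in the classical Anderson model this yields positivity of the Lyapunov exponent of $\mathbf A_E$ over $(\bK,\widehat G,\widehat m)$ for every $E$, the absence of any measure on $M\times\RP^1$ that is $\Prob$-a.s.\ invariant under $\{\Psi_{E,\cdot}\}$, and the existence of the Oseledets splitting $\mathbb R^2=E^s_E\oplus E^u_E$ over $\widehat m$-a.e.\ point. In particular $\msp_{E_1}=\delta_{E^u_{E_1}}$ and $\msp_{E_2}=\delta_{E^s_{E_2}}$ are admissible invariant measures in Theorem~\ref{t.main}. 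Feeding them into~\eqref{eq.main} and using the elementary bound $|\partial_E\,[\mathbf A_E(x,\bar\omega)]y|\le1$ in the angular coordinate on $\RP^1$ (immediate from $\partial_E\mathbf A_E=\mathrm{diag}(1,0)$ and $\det\mathbf A_E\equiv1$), the oriented half-interval entering $\Phi_{\msp_{E_2,\widehat G(x,\bar\omega)}}$ has length at most $\delta:=|E_2-E_1|$ and, by equivariance of the unstable direction, begins at $E^u_{E_1}(\widehat G(x,\bar\omega))$. Hence the integrand in~\eqref{eq.main} is dominated in absolute value by $\Ind[\,d\bigl(E^s_{E_2}(\widehat G(x,\bar\omega)),\,E^u_{E_1}(\widehat G(x,\bar\omega))\bigr)\le\delta\,]$, and by $\widehat m$-invariance
\[
|\rho(E_2)-\rho(E_1)|\ \le\ \widehat m\bigl(\{(x,\bar\omega)\in\bK:\ d(E^s_{E_2}(x,\bar\omega),E^u_{E_1}(x,\bar\omega))\le\delta\}\bigr).
\]

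\emph{Conditional independence and the key estimate.} The direction $E^u_{E_1}(x,\bar\omega)$ depends only on $x$ and $(\omega_{-1},\omega_{-2},\dots)$, whereas $E^s_{E_2}(x,\bar\omega)$ depends only on $x$ and $(\omega_0,\omega_1,\dots)$; conditionally on $x$ they are therefore independent, with laws $\mu^u_{E_1,x}$ and $\mu^s_{E_2,x}$, where $\mu^u_{E,x}$ is precisely the fiber conditional at $x$ of the stationary measure of the bundle system $\{\Psi_{E,\cdot}\}$. Consequently
\[
|\rho(E_2)-\rho(E_1)|\ \le\ \int_M\ \sup_{p\in\RP^1}\mu^u_{E_1,x}(B(p,\delta))\,dm(x),
\]
so the theorem follows once one establishes the uniform bound
\[
\sup_{x\in M}\ \sup_{p\in\RP^1}\ \mu^u_{E,x}(B(p,\delta))\ \le\ C\,\delta^{\alpha}\qquad(E\in J),
\]
with $C,\alpha>0$ depending only on $J,G,\varphi,\xi$. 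This is the H\"older (Frostman) regularity of the stationary measure of $\{\Psi_{E,\cdot}\}$, uniform in $E$, and is the exact analogue of the ingredient used to deduce Theorem~\ref{c:iid-Holder} from the results on H\"older continuity of stationary measures of smooth random dynamical systems~\cite{GKM}; its hypotheses were verified above, the moment condition being provided by $\int_{\mathbb R}|x|^\gamma\,d\xi<\infty$ together with boundedness of $\varphi$, and uniformity over $E\in J$ by continuity of the Lyapunov exponent $L(E)$ (which is positive on $J$) and of the non-degeneracy and moment data.

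\emph{Main obstacle.} Granting Theorem~\ref{t.main}, the only substantive step is the uniform estimate just stated. The difficulty is that the random matrix products here are \emph{driven} by the non-i.i.d.\ ergodic background, so the Guivarc'h--Le~Page machinery behind \cite{GKM} must be run with a parameter --- equivalently, for the i.i.d.\ random dynamics on the bundle $M\times\RP^1$ over a general compact metric space rather than on a circle or a manifold --- and one must verify that the expansion, irreducibility and moment estimates hold \emph{uniformly} over the compact family of driving sequences $\{\varphi(G^jx)\}_j$ (using compactness of $M$ and continuity of $\varphi$) and uniformly for $E\in J$, including energies inside the spectrum, where the hyperbolicity degenerates. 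This is exactly where $\#\Omega\ge2$ and the finite moment of $\xi$ enter, precisely as in the proofs of H\"older continuity of the IDS and of the Lyapunov exponent for the classical Anderson model. The remaining steps --- the reduction to the fibered rotation number, the transfer-matrix length bound, and the conditional-independence argument --- are routine.
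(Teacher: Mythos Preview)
Your approach is correct and follows the same architecture as the paper: relate the IDS increment to the fibered rotation number via Theorem~\ref{t.main}, then control the right-hand side using uniform H\"older regularity of the fiberwise stationary measures, supplied by~\cite{GKM}. The variation you introduce --- working over the full base $M\times\Omega^{\mathbb Z}$ with the Dirac measures on the Oseledets sections and then factoring through past/future conditional independence --- is sound and lands on exactly the estimate $\sup_p\mu^u_{E,x}(B(p,\delta))\le C\delta^\alpha$ that the paper isolates as Lemma~\ref{l.decomp}; the paper instead follows the template of Theorem~\ref{t.main2}, treating the system as an i.i.d.\ random dynamical system on $M\times\Sc$ and using the stationary measures $\nu_E^{\pm}$ directly (your $\mu^u_{E,x}$ is precisely the paper's $\nu^+_{E,x}$). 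Your route has the minor extra cost of invoking positivity of the Lyapunov exponent to have Oseledets directions at all; the paper's stationary-measure formulation sidesteps this.

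Two corrections to your ``main obstacle'' paragraph. First, no bundle extension of~\cite{GKM} is needed: the paper applies \cite[Theorem~2.8]{GKM} \emph{on the circle fiber}, feeding it the compact family of distributions $\{\mu^E_a:E\in J,\ |a|\le\|\varphi\|_\infty\}$ on $\Diff^1(\Sc)$ induced by the random Schr\"odinger matrices with background value~$a$; the uniformity in~$x$ and in~$E$ then comes from compactness of that family, not from continuity of $L(E)$ (which would be a nontrivial separate theorem in this setting, and is not used). Second, the ``no deterministic images'' hypothesis of that theorem fails at step one (every Schr\"odinger matrix sends the horizontal direction to the vertical one, regardless of $\omega_0$), so one must pass to the two-step family $\{\mu^E_a*\mu^E_b\}$ before invoking it; your one-step parabolic-quotient remark establishes no common invariant line, but not this stronger condition.
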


While  most likely Theorem  \ref{t.IDS} can be obtained by some spectral methods, it does not seem to be currently available in the spectral theory literature. In Section~\ref{s.backgroundAM} we provide a purely dynamical proof of Theorem \ref{t.IDS} based on Theorem \ref{t.main} and the results from \cite{GKM}.







\section{Proof of the explicit formula for the rotation number}\label{s.proofs3}

The proof of our main result is based on an (exact) formula for the translation number of a skew product, Theorem~\ref{p:formula} below, that could be of independent interest. Prior to stating it, let us consider a basic example of the rotation number of a single homeomorphism. Namely, let $g:\Sc\to \Sc$ be a homeomorphism without fixed points; then it has a lift $\widetilde{g}$ satisfying $\ty<\widetilde{g}(\ty)<\ty+1$ for all $\ty\in \R$. Let $\msp_1,\msp_2$ be two probability measures on the circle, and consider the integral
\begin{equation}\label{eq:I-def}
\mathcal{I}(\msp_1,\msp_2)=\int_{\Sc} \msp_2(I(y)) \, d\msp_1(y) = (\msp_1\times \msp_2)(K),
\end{equation}
where $I(y)=[y,g(y))$ and $K$ is the projection on $(\Sc)^2$ of the set $\{(\ty,\widetilde{z}) \mid \ty<\widetilde{z}<\widetilde{g}(\ty)\}$ on the plane.

\begin{prop}
If at least one of two measures $\msp_1,\msp_2$ is $g$-invariant, then $\mathcal{I}(\msp_1,\msp_2)=\rho(\widetilde{g})$.
\end{prop}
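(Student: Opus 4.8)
The plan is to rewrite the integrand of $\mathcal{I}(\msp_1,\msp_2)$ in terms of a ``distribution function'' of the lifted measure $\widetilde{\msp_2}$, and then, according to which of the two measures is $g$-invariant, to recognize one of the two classical descriptions of the rotation number: the Birkhoff-average formula $\rho(\widetilde{g})=\int_{\Sc}(\widetilde{g}(\ty)-\ty)\,d\msp$ over an invariant $\msp$ (recalled in the introduction), or Poincar\'e's limit $\widetilde{g}^{\,n}(0)/n$. Concretely, I would first set $u(t):=\Phi_{\msp_2}(0,t)$, so that $u(t)=\widetilde{\msp_2}([0,t))$ for $t\ge0$, $u(t)=-\widetilde{\msp_2}([t,0))$ for $t<0$, and $u(b)-u(a)=\Phi_{\msp_2}(a,b)$ in general; since $\widetilde{\msp_2}$ gives unit mass to every interval of length one, $u$ is non-decreasing, $u(t+1)=u(t)+1$, and $|u(t)-t|\le1$. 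Because $g$ has no fixed points, $0<\widetilde{g}(\ty)-\ty<1$, so the lift $[\ty,\widetilde{g}(\ty))$ of the arc $I(y)=[y,g(y))$ embeds into $\Sc$; hence $\msp_2(I(y))=\widetilde{\msp_2}([\ty,\widetilde{g}(\ty)))=u(\widetilde{g}(\ty))-u(\ty)$ (which, using $u(t+1)=u(t)+1$ and $\widetilde{g}(\ty+1)=\widetilde{g}(\ty)+1$, is independent of the chosen lift), and therefore
\[
\mathcal{I}(\msp_1,\msp_2)=\int_{\Sc}\bigl(u(\widetilde{g}(\ty))-u(\ty)\bigr)\,d\msp_1(y).
\]

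Next I would treat the case when $\msp_2$ is $g$-invariant. Then $\widetilde{\msp_2}$ is $\widetilde{g}$-invariant, and since $\widetilde{g}$ is an increasing homeomorphism of $\R$ one gets, for every $t$, $u(\widetilde{g}(t))-u(\widetilde{g}(0))=\widetilde{\msp_2}([\widetilde{g}(0),\widetilde{g}(t)))=\widetilde{\msp_2}(\widetilde{g}([0,t)))=\widetilde{\msp_2}([0,t))=u(t)$. Hence $u\circ\widetilde{g}=u+c$ with $c:=u(\widetilde{g}(0))$, the integrand above is the constant $c$, and so $\mathcal{I}(\msp_1,\msp_2)=c$ for \emph{every} $\msp_1$. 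Finally, iterating gives $u(\widetilde{g}^{\,n}(0))=nc$, while $|u(s)-s|\le1$ forces $c=\lim_{n\to\infty}\widetilde{g}^{\,n}(0)/n=\rho(\widetilde{g})$.

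For the case when $\msp_1$ is $g$-invariant I would instead write $u(t)=t+v(t)$ with $v$ bounded and $1$-periodic, so that $v$ descends to a bounded (hence $\msp_1$-integrable) function $\bar v$ on $\Sc$; then the integrand equals $(\widetilde{g}(\ty)-\ty)+(\bar v(g(y))-\bar v(y))$. The coboundary term integrates to zero against the $g$-invariant measure $\msp_1$, and $\int_{\Sc}(\widetilde{g}(\ty)-\ty)\,d\msp_1(y)=\rho(\widetilde{g})$ by the Birkhoff Ergodic Theorem, so again $\mathcal{I}(\msp_1,\msp_2)=\rho(\widetilde{g})$, which finishes the proof.

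I expect the whole argument to be routine; the only points needing care will be verifying that the arc $I(y)$ lifts to an embedded interval — this is precisely where absence of fixed points enters and guarantees the identity $\msp_2(I(y))=u(\widetilde{g}(\ty))-u(\ty)$ — and that $u$ differs from the identity by a bounded $1$-periodic function, which is what makes the coboundary cancellation and the Poincar\'e limit legitimate. There is no real obstacle: the proposition is essentially the observation that the two standard formulas for $\rho(\widetilde{g})$ are two shadows of the single quantity $\mathcal{I}(\msp_1,\msp_2)$.
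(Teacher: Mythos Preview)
Your proof is correct and follows essentially the same approach as the paper: for $\msp_2$ invariant both you and the paper show the integrand $\msp_2(I(y))$ is the constant $\rho(\widetilde g)$, and for $\msp_1$ invariant both reduce to the space-average formula for the rotation number. The only cosmetic difference is that in the $\msp_1$-invariant case you package the reduction as a coboundary cancellation via $u=\id+v$, whereas the paper telescopes $\widetilde{\msp}_2([\ty,\widetilde g^{\,n}(\ty)))=\sum_{j=0}^{n-1}\msp_2(I(g^j y))$, divides by $n$, and invokes Birkhoff directly; your version avoids passing to the limit and is arguably a touch cleaner.
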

\begin{proof}
If $\msp_1$ is $g$-invariant, then it is a corollary of Birkhoff ergodic theorem. Namely, we have
\[
\widetilde{\msp}_2([\ty, \widetilde{g}^n(\ty))) = \sum_{j=0}^{n-1} \msp_2(I(g^j(y));
\]
after division by $n$ and passing to the limit the left hand side converges to the rotation number $\rho(\widetilde{g})$, while the right hand side by Birkhoff ergodic theorem converges to $\mathcal{I}(\msp_1,\msp_2)$.

If $\msp_2$ is $g$-invariant, then actually for any $y$ one has $\msp_2(I(y))=\rho(\widetilde{g})$, so in~\eqref{eq:I-def} we're integrating a constant function.
\end{proof}

Now, consider a skew product $F(x,y)=(Tx,f_x(y))$ and its lift $\widetilde{F}(x,\ty)=(Tx,\tf_x(y))$. Given two  measures $\msp_1,\msp_2$ on $X\times \Sc$ that project to $\mgr$ on the first coordinate, let us define the translation value associated to these measures. To do so, for any two compactly supported probability  measures $m_1,m_2$ on $\R$ and a measure $\msp$ on the circle, define the {signed shift} of measure $m_2$ with respect to measure $m_1$, {measured with}~$\nu$ by
\[
\Phi_{\msp}(m_1,m_2) :=\int \left(m_1((-\infty,y])-m_2((-\infty,y])\right) \, d\widetilde{\msp}(y),
\]
where $\widetilde{\msp}$ is the infinite mass lift of $\msp$, i.e. a unique infinite measure of $\R$ such that for any $c\in\R$, a projection of the restriction $\msp|_{[c, c+1)}$ to $\Sc$ gives $\msp$. Note that for the point masses $m_1=\delta_a$, $m_2=\delta_b$, this shift is equal to $\Phi_{\msp}(a,b)$ defined by~\eqref{e.Phi}. In general, $\Phi_{\msp}(m_1,m_2)$  is equal to the result of the averaging of $\Phi_{\msp}(a,b)$:
\begin{lemma}\label{l.shift}
    For any probability measure~$m$ on pairs $(a,b)$ with marginals $m_1$ and $m_2$, one has
    \[
        \Phi_{\msp}(m_1,m_2) = \int \Phi_{\msp}(a,b) \, dm(a,b).
    \]
\end{lemma}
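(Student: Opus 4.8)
The plan is to unwind both sides of the claimed identity into integrals against the infinite lift $\widetilde{\msp}$ and then apply Fubini's theorem. First I would recall the definition of the left-hand side: writing $F_i(y) = m_i((-\infty,y])$ for the cumulative distribution functions of $m_1,m_2$, we have $\Phi_{\msp}(m_1,m_2) = \int_{\R} \bigl(F_1(y) - F_2(y)\bigr)\, d\widetilde{\msp}(y)$. The key observation is that $F_i(y) = m_i((-\infty,y]) = \int \Ind_{[a,+\infty)}(y)\, dm_i(a)$, or more to the point, since $m$ has marginals $m_1$ and $m_2$, that $F_1(y) - F_2(y) = \int \bigl(\Ind_{[a,+\infty)}(y) - \Ind_{[b,+\infty)}(y)\bigr)\, dm(a,b)$ — here I use that $\Ind_{[a,+\infty)}(y) = \Ind_{a \le y} = \Ind_{(-\infty,y]}(a)$, so integrating the first term over $m(a,b)$ in $a$ recovers $F_1(y)$ and similarly for the second.

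Next I would substitute this into the definition of $\Phi_{\msp}(m_1,m_2)$ and swap the order of integration:
\[
\Phi_{\msp}(m_1,m_2) = \int_{\R} \left( \int \bigl(\Ind_{[a,+\infty)}(y) - \Ind_{[b,+\infty)}(y)\bigr)\, dm(a,b) \right) d\widetilde{\msp}(y) = \int \left( \int_{\R} \bigl(\Ind_{[a,+\infty)} - \Ind_{[b,+\infty)}\bigr)\, d\widetilde{\msp} \right) dm(a,b).
\]
By the first equality in \eqref{e.Phi}, the inner integral is exactly $\Phi_{\msp}(a,b)$, which gives the claim $\Phi_{\msp}(m_1,m_2) = \int \Phi_{\msp}(a,b)\, dm(a,b)$.

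The only genuine point requiring care — and the main obstacle — is justifying the application of Fubini's theorem, since $\widetilde{\msp}$ has infinite total mass. The integrand $\Ind_{[a,+\infty)}(y) - \Ind_{[b,+\infty)}(y)$ is not in general nonnegative, so I cannot invoke Tonelli directly; instead I would check absolute integrability of $(a,b,y) \mapsto \bigl|\Ind_{[a,+\infty)}(y) - \Ind_{[b,+\infty)}(y)\bigr|$ against $dm(a,b)\, d\widetilde{\msp}(y)$. Since $m_1,m_2$ are compactly supported — say both are supported in an interval $[-R,R]$ — the difference $\Ind_{[a,+\infty)}(y) - \Ind_{[b,+\infty)}(y)$ vanishes whenever $y \notin [-R,R]$, so the $y$-integral is effectively over a bounded set on which $\widetilde{\msp}$ is finite; the absolute value of the integrand is bounded by $1$, and $m$ is a probability measure, so the triple integral of the absolute value is at most $\widetilde{\msp}([-R,R]) < \infty$. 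This legitimizes the interchange and completes the proof.
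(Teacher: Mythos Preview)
Your proof is correct and follows essentially the same approach as the paper: both compute the double integral of $D(a,b;y)=\Ind_{[a,+\infty)}(y)-\Ind_{[b,+\infty)}(y)$ against $m\times\widetilde{\msp}$ in the two possible orders and identify the results with the two sides of the claimed identity. You additionally justify the use of Fubini via the compact support of $m_1,m_2$, a point the paper leaves implicit.
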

\begin{proof}
    Consider a function $D(a,b;y)$ on $\R^3$, given by
    \[
        D(a,b;y)=\Ind_{[a,+\infty)}(y) - \Ind_{[b,+\infty)}(y).
    \]
    It then suffices to calculate the integral of $D(a,b;y)$ with respect to $m\times \widetilde{\msp}$ with two different orders of integration. Indeed, on one hand,
    \[
        \int D(a,b;y) \, dm(a,b) \, d\widetilde{\msp}(y) = \int \Phi_{\msp}(a,b) \, dm(a,b),
    \]
    while on the other, integrating over $m(a,b)$ first gives
    \[
        \int D(a,b;y) \, dm(a,b) = m_1((-\infty,y])-m_2((-\infty,y]),
    \]
    and integration over $\widetilde{\msp}$ concludes the proof.
\end{proof}

Next, let $\{\msp_{1,x}\}_{x\in X}$ be the decomposition of the measure $\msp_1$ into the conditional measures  on the circle, and let $\{\msp'_{1,x}\}$ be a family of compactly supported  probability measures on $\R$ with uniformly bounded supports such that $\msp'_{1,x}$ 
projects to $\msp_{1,x}$. We define the translation value
\begin{equation}\label{eq:mT-def}
    \mathcal{T}(\tF;\msp_1,\msp_2):=\int_X  \Phi_{\msp_{2,Tx}} (\msp'_{1,Tx},(\widetilde{f}_x)_* \msp'_{1,x}) \, d\mgr(x).
\end{equation}

\begin{remark}
It is not hard to see that the quantity $\mathcal{T}(\tF;\msp_1,\msp_2)$ is well defined, i.e.  does not depend on the choice of the lifts $\msp'_{1,x}$.
\end{remark}

\begin{example}
{If $\msp_1$ is a measure supported on a section $y=\gamma(x)$, and $\msp_2=\mgr\times \Leb_{\,\Sc}$, then $\mathcal{T}(\tF;\msp_1,\msp_2)$ is exactly the average shift between the lift of the section and its $\widetilde{F}$-image}, see Fig.\,\ref{f:sections}.
\end{example}

\begin{figure}[!ht]
\includegraphics[scale=0.8]{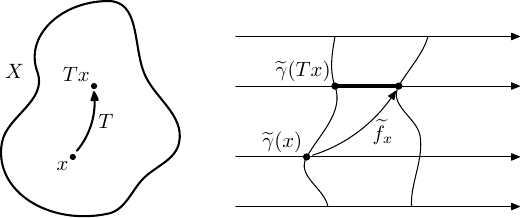}
\caption{The case of a measure $\nu_1$ supported on a section $\gamma$: the translation value $\mathcal{T}$ is equal to the average measure of the half-open interval between the lift of the section $\gamma$ and its image $\widetilde{F}\gamma$.} \label{f:sections}
\end{figure}

\begin{lemma}\label{l:t-limit}
For any probability measures $\msp_1,\msp_2$ on $\Sc$, one has
\[
\lim_{n\to\infty} \frac{1}{n} \mathcal{T}(\tF^n;\msp_1,\msp_2) = \rh(\tF)
\]
\end{lemma}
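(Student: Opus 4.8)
The plan is to relate the translation value $\mathcal{T}(\tF^n;\msp_1,\msp_2)$ for the $n$-th iterate directly to $\mathcal{T}(\tF;\cdot,\cdot)$ for the base map via a telescoping (cocycle) identity, and then to identify the growth rate with $\rho(\tF)$ by Birkhoff's ergodic theorem, following the model computation in the single-homeomorphism Proposition above. First I would note that $\mathcal{T}$ is additive under composition in a telescoping sense: writing $S_{n;x}:=\tf_{T^{n-1}x}\circ\dots\circ\tf_x$ for the lifted orbit map, the key is the identity
\[
\Phi_{\msp_{2,x}}\bigl(\msp'_{1,x},(S_{n;x})_*\msp'_{1,x}\bigr)
= \sum_{j=0}^{n-1}\Phi_{\msp_{2,T^jx}}\bigl((S_{j;x})_*\msp'_{1,x},(S_{j+1;x})_*\msp'_{1,x}\bigr),
\]
which follows because $\Phi_\msp(m_1,m_3)=\Phi_\msp(m_1,m_2)+\Phi_\msp(m_2,m_3)$ for a fixed measuring measure $\msp$ (immediate from the definition $\Phi_\msp(m_1,m_2)=\int(m_1((-\infty,y])-m_2((-\infty,y]))\,d\widetilde{\msp}(y)$), together with the invariance $(f_{T^jx})_*\msp_{2,T^jx}=\msp_{2,T^{j+1}x}$, which lets me rewrite each summand's measuring measure consistently. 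Summing/integrating over $x$ against $\mgr$ and using $T$-invariance of $\mgr$, each summand contributes exactly $\mathcal{T}(\tF;\msp_1,\msp_2)$ after reindexing, so $\mathcal{T}(\tF^n;\msp_1,\msp_2)=n\,\mathcal{T}(\tF;\msp_1,\msp_2)$ — but this would make the lemma trivial and is almost certainly too strong; the subtlety is that $\mathcal{T}(\tF^n;\cdot,\cdot)$ is defined using $\msp_2$ itself as the measuring measure at the single step $T^n x \leftarrow x$, not the intermediate $\msp_{2,T^jx}$, and $\msp_2$ is stationary for $\tF$ but not literally invariant along each fiber unless we are careful. So the correct route is: expand $\mathcal{T}(\tF^n;\msp_1,\msp_2)$ as $\int_X \Phi_{\msp_{2,T^nx}}(\msp'_{1,T^nx},(S_{n;x})_*\msp'_{1,x})\,d\mgr(x)$, insert the telescoping of the \emph{argument} measures $(S_{j;x})_*\msp'_{1,x}$ while keeping the measuring measure fixed at $\msp_{2,T^nx}$, and then use the fiberwise invariance relation $(f_{E,x})_*\msp_{2,x}=\msp_{2,Tx}$ to push the measuring measure back step by step — this is exactly the mechanism of the Proposition's first case, where $\widetilde{\msp}_2([\ty,\widetilde{g}^n\ty))=\sum_j\msp_2(I(g^j y))$.

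Concretely, the main computation I would carry out is: fix lifts $\msp'_{1,x}$ with uniformly bounded supports; then
\[
\Phi_{\msp_{2,T^nx}}\bigl(\msp'_{1,T^nx},(S_{n;x})_*\msp'_{1,x}\bigr)
\]
differs from $\Phi_{\msp_{2,T^nx}}\bigl((S_{n;x})_*\msp'_{1,x},\ \text{reference}\bigr)$-type terms by bounded errors (since supports are uniformly bounded, $\widetilde{\msp}_{2,T^nx}$ of a bounded interval is bounded). The honest content is that, up to an $O(1)$ error independent of $n$,
\[
\Phi_{\msp_{2,T^nx}}\bigl(\msp'_{1,T^nx},(S_{n;x})_*\msp'_{1,x}\bigr)
\;\approx\;\widetilde{\msp}_{2,T^nx}\bigl([\tf\text{-image at time }0,\ \tf\text{-image at time }n)\bigr),
\]
and by the invariance $(f_{T^jx})_*\msp_{2,T^jx}=\msp_{2,T^{j+1}x}$ pushing the measuring measure down, this half-interval measure equals $\sum_{j=0}^{n-1}\Phi_{\msp_{2,T^{j+1}x}}\bigl((S_{j;x})_*\msp'_{1,x},(S_{j+1;x})_*\msp'_{1,x}\bigr)$ exactly (no error). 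Integrating over $X$, using $T$-invariance of $\mgr$, the $j$-th term integrates to $\mathcal{T}(\tF;\msp_1,\msp_2)$ — wait, rather: the integrals of all $n$ terms are equal to a single fixed quantity, call it $c$, and I then need to identify $c=\rho(\tF)$. For the identification I would compare with the definition \eqref{eq:rho-def}: $\frac1n S_{n;E,x}(y)\to\rho(E)$; since $\Phi_{\msp_{2}}(\msp'_{1,x},(S_{n;x})_*\msp'_{1,x})$ is, up to $O(1)$, the $\widetilde{\msp}_{2}$-measure of a half-interval of length $\approx S_{n;x}(y)\sim n\rho(\tF)$, and $\widetilde{\msp}_2$ has unit density on average (it projects to a probability measure on $\Sc$), Birkhoff applied to the function $x\mapsto \widetilde{\msp}_{2,Tx}$-measure of the one-step increment yields $\frac1n\mathcal{T}(\tF^n;\msp_1,\msp_2)\to\rho(\tF)$.

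I expect the main obstacle to be the bookkeeping around the fact that $\msp_2$ (equivalently its fiber measures $\msp_{2,x}$) is only invariant in the fiberwise-pushforward sense $(f_x)_*\msp_{2,x}=\msp_{2,Tx}$, not a genuinely invariant object one can slide freely — so I must be disciplined about \emph{which} fiber's measure is doing the measuring at each telescoping step, and verify that the pushforward relation lines up the indices correctly so that the telescoped sum has all terms equal after integrating against the $T$-invariant $\mgr$. A secondary technical point is controlling the $O(1)$ discrepancies coming from using measures $\msp'_{1,x}$ with bounded (rather than point-mass) supports; these are harmless because the supports are uniformly bounded and $\widetilde{\msp}_{2,x}$ assigns uniformly bounded mass to any fixed-length interval, so after dividing by $n$ they vanish in the limit. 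Once these two points are handled, the convergence to $\rho(\tF)$ is a direct application of the Birkhoff ergodic theorem exactly as in the single-map Proposition, and the claimed limit follows.
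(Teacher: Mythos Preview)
Your proposal has a genuine gap: you repeatedly invoke the fiberwise invariance relation $(f_{T^j x})_*\msp_{2,T^j x}=\msp_{2,T^{j+1}x}$ to ``push the measuring measure back step by step,'' but Lemma~\ref{l:t-limit} makes \emph{no} invariance assumption on either $\msp_1$ or $\msp_2$ --- that hypothesis only enters later, in Theorem~\ref{p:formula}. Without it, your telescoping with varying measuring measures does not line up, and the claim that the $n$ summands integrate to a common value $c$ is unsupported. In fact the telescoping/cocycle mechanism you are sketching is exactly the content of Lemma~\ref{l:cocycle} and Theorem~\ref{p:formula}, which \emph{use} the present lemma rather than the other way around; so your argument is also structurally circular.

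The correct idea is present in your proposal but buried: the observation that $\Phi_{\msp_{2,T^n x}}(\msp'_{1,T^n x},(S_{n;x})_*\msp'_{1,x})$ differs by $O(1)$ from the displacement $S_{n;x}(0)$, because $\widetilde{\msp}_{2,T^n x}$ assigns mass exactly~$1$ to every half-open interval of length~$1$. That single estimate \emph{is} the whole proof --- no telescoping, no Birkhoff applied to a one-step function, no invariance of $\msp_2$. The paper simply fixes lifts $\msp'_{1,x}$ supported on $[0,1]$, notes that both $\msp'_{1,T^n x}$ and $(S_{n;x})_*\msp'_{1,x}$ are supported on intervals of length~$1$ (the latter containing $S_{n;x}(0)$), so
\[
\bigl|\Phi_{\msp_{2,T^n x}}(\msp'_{1,T^n x},(S_{n;x})_*\msp'_{1,x}) - S_{n;x}(0)\bigr|\le 2,
\]
integrates over $X$, divides by $n$, and appeals to Proposition~\ref{p.existence}. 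Drop the telescoping entirely and make this direct comparison the argument.
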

\begin{proof}
{Take the lifts $\msp'_{1,x}$ to be supported on $[0,1]$ for all $x\in X$. Then for any $n$ and every $x\in X$ the push-forward measure
\[
(\tf_{T^{n-1}x}\circ \dots \circ \tf_{Tx}\circ \tf_x)_* \msp'_{1,x}
\]
is supported on the interval
\[
(\tf_{T^{n-1}x}\circ \dots \circ \tf_{Tx}\circ \tf_x)([0,1]).
\]
As the $d\widetilde{\msp}_{2,x}$-measure of every half-open interval of length~$1$ is exactly equal to~$1$, one has
\[
\left| \Phi_{\msp_{2,T^{n}x}} \left(\msp'_{1,T^nx},(\tf_{T^{n-1}x}\circ \dots \circ \tf_{Tx}\circ \tf_x)_* \msp'_{1,x}\right) - S_{n; E, x}(0)  \right| \le 2.
\]
Integrating over~$x\in X$, we get
\[
\left| \mathcal{T}(\tF^n;\msp_1,\msp_2) -\int_X S_{n; E, x}(0) \, d\mgr(x) \right| \le 2;
\]
dividing by $n$ and passing to the limit using Proposition~\ref{p.existence}, we complete the proof.
}
\end{proof}

\begin{lemma}\label{l:cocycle}
The translation value satisfies a cocycle relation:
    \begin{equation}\label{eq:cocycle}
        \mathcal{T}(\tF^n;\msp_1,\msp_2)=\mathcal{T}(\tF;\msp_1,\msp_2)+\mathcal{T}(\tF^{n-1};F_*\msp_1,\msp_2)
    \end{equation}
\end{lemma}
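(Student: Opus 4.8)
The plan is to prove the cocycle relation~\eqref{eq:cocycle} by unwinding the definition~\eqref{eq:mT-def} of the translation value and splitting the sum over $n$ iterates into the first iterate plus the remaining $n-1$. The key algebraic identity is a ``telescoping'' property of $\Phi_{\msp}$: for compactly supported probability measures $m_1, m_2, m_3$ on $\R$ and any circle measure $\msp$,
\[
\Phi_{\msp}(m_1, m_3) = \Phi_{\msp}(m_1, m_2) + \Phi_{\msp}(m_2, m_3).
\]
This is immediate from the definition, since $\Phi_{\msp}(m_1, m_2) = \int (m_1((-\infty,y]) - m_2((-\infty,y]))\,d\widetilde{\msp}(y)$ is manifestly additive in the ``signed increment'' $m_1 - m_2$. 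I would record this as a one-line observation (or cite Lemma~\ref{l.shift} if a measure-valued version is cleaner) before starting.

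First I would write out $\mathcal{T}(\tF^n;\msp_1,\msp_2)$ using~\eqref{eq:mT-def} applied to the skew product $\tF^n$, whose fiber map over $x$ is $\tf^{(n)}_x := \tf_{T^{n-1}x}\circ\cdots\circ\tf_{Tx}\circ\tf_x$ and whose base map is $T^n$. Choosing a family of lifts $\msp'_{1,x}$ with uniformly bounded supports, this gives
\[
\mathcal{T}(\tF^n;\msp_1,\msp_2) = \int_X \Phi_{\msp_{2,T^n x}}\bigl(\msp'_{1,T^n x},\, (\tf^{(n)}_x)_* \msp'_{1,x}\bigr)\, d\mgr(x).
\]
The natural intermediate measure to insert is $(\tf_x)_*\msp'_{1,x}$, which is a compactly supported lift of the conditional measure of $F_*\msp_1$ over the point $Tx$; call it $\msp'_{1,Tx}{}^{(*)}$ (a legitimate choice of lift for the measure $F_*\msp_1$). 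Applying the telescoping identity with $m_1 = \msp'_{1,T^n x}$, $m_2 = (\tf^{(n-1)}_{Tx})_*\bigl((\tf_x)_*\msp'_{1,x}\bigr)$, $m_3 = (\tf^{(n)}_x)_*\msp'_{1,x}$ — note $m_2 = m_3$ since $\tf^{(n)}_x = \tf^{(n-1)}_{Tx}\circ\tf_x$, so actually the decomposition is at the level of the \emph{base point}: split the integral by comparing the transport over the last $n-1$ steps starting from $F_*\msp_1$ versus the full $n$ steps.

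Concretely, I would write $\Phi_{\msp_{2,T^n x}}(\msp'_{1,T^n x}, (\tf^{(n)}_x)_*\msp'_{1,x}) = \Phi_{\msp_{2,T^n x}}(\msp'_{1,T^n x}, (\tf^{(n-1)}_{Tx})_*(\tf_x)_*\msp'_{1,x})$ and recognize the right side, after the change of variables $x \mapsto Tx$ in the integral (using $T$-invariance of $\mgr$), as exactly $\mathcal{T}(\tF^{n-1}; F_*\msp_1, \msp_2)$ evaluated with the lift family $\{(\tf_x)_*\msp'_{1,x}\}$ for $F_*\msp_1$ — plus a boundary term. The boundary term is $\int_X \Phi_{\msp_{2,Tx}}(\msp'_{1,Tx}, (\tf_x)_*\msp'_{1,x})\,d\mgr(x) = \mathcal{T}(\tF;\msp_1,\msp_2)$. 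Since the value $\mathcal{T}$ is independent of the choice of bounded lifts (the Remark following~\eqref{eq:mT-def}), I can freely match up the two lift families, and the identity~\eqref{eq:cocycle} follows by rearranging.

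The main obstacle is purely bookkeeping: keeping straight which base point each conditional measure sits over and confirming that $\{(\tf_x)_*\msp'_{1,x}\}_{x}$ really is an admissible bounded-support lift family for $F_*\msp_1$ (it has uniformly bounded supports because the $\tf_x$ move points by a bounded amount — more precisely, by the cocycle telescoping we only ever shift by at most one coordinate step, and the supports of $\msp'_{1,x}$ were bounded, so $(\tf_x)_*\msp'_{1,x}$ is supported in a bounded set; one should note that the fiber maps are lifts of \emph{circle} homeomorphisms so $\tf_x(\ty) - \ty$ is bounded). Once that is checked, the proof is just the additivity of $\Phi$ plus the change of variables, so I would keep the write-up to a few lines.
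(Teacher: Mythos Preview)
Your approach is correct and is essentially the same as the paper's: both rest on the additivity relation $\Phi_{\msp}(m_1,m_3)=\Phi_{\msp}(m_1,m_2)+\Phi_{\msp}(m_2,m_3)$, the observation that $\{(\tf_x)_*\msp'_{1,x}\}$ furnishes a legitimate bounded-support lift family for $F_*\msp_1$, and a change of variables using $T$-invariance of~$\mgr$. The paper streamlines the bookkeeping by first rewriting~\eqref{eq:mT-def} via $y=Tx$ as $\int_X \Phi_{\msp_{2,y}}(\msp'_{1,y},(\tF_*\msp'_1)_y)\,d\mgr(y)$, so that the intermediate measure $(\tF_*\msp'_1)_y$ can be inserted directly with a single application of additivity and no further change of variables; this avoids the false start in your outline (the attempt where $m_2=m_3$) and the separate coordinate change for the boundary term.
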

\begin{proof}
    Let us first rewrite the definition~\eqref{eq:mT-def}. Namely, let $\msp_1'$ be the measure on $X\times \R$ such that its conditional measure on a.e. fiber $\{x\}\times \R$ is $\msp'_{1,x}$. Then, the change of variables $y=Tx$ transforms~\eqref{eq:mT-def} into
    \[
        \int_X  \Phi_{\msp_{2,y}} (\msp'_{1,y},(\widetilde{F}_* \msp'_{1})_y) \, d\mgr(y),
    \]
    where $(\cdot)_y$ stays for the conditional measure on the corresponding fiber. We then get
    \begin{multline*}
        \mathcal{T}(\tF^n;\msp_1,\msp_2)=
        \int_X  \Phi_{\msp_{2,y}} (\msp'_{1,y},(\widetilde{F}^n_* \msp'_{1})_y) \, d\mgr(y)
        \\
        = \int_X  \Phi_{\msp_{2,y}} (\msp'_{1,y},(\widetilde{F}_* \msp'_{1})_y) \,
        d\mgr(y) +  \int_X  \Phi_{\msp_{2,y}} ((\widetilde{F}_* \msp'_{1})_y,(\widetilde{F}^n_* \msp'_{1})_y) \,
        d\mgr(y)
        \\
        =\mathcal{T}(\tF;\msp_1,\msp_2)+\mathcal{T}(\tF^{n-1};F_*\msp_1,\msp_2),
    \end{multline*}
    where for the second equality we have used the additivity relation
    \begin{equation}\label{eq:Phi-additive}
        \Phi_{\nu}(m_1,m_3) = \Phi_{\nu}(m_1,m_2) +
        \Phi_{\nu}(m_2,m_3).
    \end{equation}
\end{proof}

\begin{theorem}\label{p:formula}
If at least one of the measures $\msp_1$, $\msp_2$ is $F$-invariant, then the translation number is exactly equal to the translation value:
\[
\rh(\tF) = \mathcal{T}(\tF;\msp_1,\msp_2).
\]
\end{theorem}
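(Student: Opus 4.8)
The plan is to exploit the cocycle relation from Lemma~\ref{l:cocycle} together with the asymptotic identity of Lemma~\ref{l:t-limit}, using $F$-invariance to make the cocycle relation telescope. First I would treat the two cases separately, according to which of $\msp_1,\msp_2$ is $F$-invariant, since the mechanisms are different (exactly as in the single-homeomorphism Proposition preceding the statement).

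\textbf{Case 1: $\msp_1$ is $F$-invariant.} Then $F_*\msp_1=\msp_1$, so the cocycle relation~\eqref{eq:cocycle} becomes $\mathcal{T}(\tF^n;\msp_1,\msp_2)=\mathcal{T}(\tF;\msp_1,\msp_2)+\mathcal{T}(\tF^{n-1};\msp_1,\msp_2)$. Iterating, $\mathcal{T}(\tF^n;\msp_1,\msp_2)=n\,\mathcal{T}(\tF;\msp_1,\msp_2)$ for every $n\ge 1$. Dividing by $n$ and invoking Lemma~\ref{l:t-limit}, the left side tends to $\rh(\tF)$ while the right side is constantly equal to $\mathcal{T}(\tF;\msp_1,\msp_2)$, giving $\rh(\tF)=\mathcal{T}(\tF;\msp_1,\msp_2)$.

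\textbf{Case 2: $\msp_2$ is $F$-invariant.} Here I would instead show that the integrand defining $\mathcal{T}(\tF;\msp_1,\msp_2)$ has the same $\mgr$-integral regardless of $\msp_1$, because $F$-invariance of $\msp_2$ forces a pointwise-in-$x$ identity after integrating over the fiber; concretely, one checks that replacing $\msp_1$ by $F_*\msp_1$ does not change $\mathcal{T}(\tF;\cdot,\msp_2)$, so that $\mathcal{T}(\tF^{n-1};F_*\msp_1,\msp_2)=\mathcal{T}(\tF^{n-1};\msp_1,\msp_2)$ and the cocycle relation again telescopes to $\mathcal{T}(\tF^n;\msp_1,\msp_2)=n\,\mathcal{T}(\tF;\msp_1,\msp_2)$; then Lemma~\ref{l:t-limit} finishes as before. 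To establish the invariance $\mathcal{T}(\tF;F_*\msp_1,\msp_2)=\mathcal{T}(\tF;\msp_1,\msp_2)$, I would use Lemma~\ref{l.shift} to rewrite the translation value as an integral of $\Phi_{\msp_{2,Tx}}(a,b)$ over the fibered measure, combine this with the additivity~\eqref{eq:Phi-additive}, and use the fact that $(f_{E,x})_*\msp_{2,x}=\msp_{2,Tx}$ so that $\widetilde{\msp}_{2,Tx}$-measures of half-intervals pull back correctly under $\tf_x$; the net effect is a change-of-variables identity on $\R$ making the fiberwise integral vanish in the appropriate sense.

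The main obstacle I expect is Case~2: unlike the single-map setting where $\msp_2(I(y))$ is literally a constant function of $y$, here $\Phi_{\msp_{2,Tx}}(\tf_{E_1,x}(\ty),\tf_{E_2,x}(\ty))$ varies with both $x$ and $y$, and the cancellation only appears after integration against $\msp_1$ and use of the conditional-measure invariance relation $(f_x)_*\msp_{2,x}=\msp_{2,Tx}$. Getting the bookkeeping of lifts right — ensuring that $(\tf_x)_*\widetilde{\msp}_{2,x}=\widetilde{\msp}_{2,Tx}$ as infinite measures on $\R$ (which holds since $\tf_x$ is a lift of $f_x$ and commutes with integer translation), and that the auxiliary compactly supported lifts $\msp'_{1,x}$ wash out — is the delicate step, but it is exactly the content already abstracted into Lemmas~\ref{l.shift}, \ref{l:t-limit}, and~\ref{l:cocycle}, so the argument should be short once those are in hand.
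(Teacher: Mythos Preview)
Your proposal is correct and follows the paper's argument: Case~1 is verbatim the same, and for Case~2 the paper uses exactly the change-of-variables idea you outline, packaged as the general (invariance-free) identity $\mathcal{T}(\tF^{k};F_*\msp_1,\msp_2)=\mathcal{T}(\tF^{k};\msp_1,F_*^{-1}\msp_2)$, after which $F$-invariance of $\msp_2$ gives $F_*^{-1}\msp_2=\msp_2$ and the cocycle relation telescopes just as in Case~1. One small slip in your write-up: the phrase ``the same $\mgr$-integral regardless of $\msp_1$'' overstates things --- you only need (and only get) invariance under $\msp_1\mapsto F_*\msp_1$, which is what you then correctly state.
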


\begin{proof}
Take an arbitrary $n$, and let us use the cocycle relation~\eqref{eq:cocycle} from Lemma~\ref{l:cocycle}. Namely, if $\msp_1$ is invariant, one has
\[
\mathcal{T}(\tF^n;\msp_1,\msp_2)=\mathcal{T}(\tF;\msp_1,\msp_2)+\mathcal{T}(\tF^{n-1};\msp_1,\msp_2),
\]
so by induction
\[
\mathcal{T}(\tF^n;\msp_1,\msp_2)=n\mathcal{T}(\tF;\msp_1,\msp_2).
\]
Dividing by $n$ and applying Lemma~\ref{l:t-limit}, we get the desired conclusion.

On the other hand, if $\msp_2$ is invariant, we can rewrite the second summand in the right hand side of~\eqref{eq:cocycle} by applying $F^{-1}$:
\[
\mathcal{T}(\tF^n;\msp_1,\msp_2)=\mathcal{T}(\tF;\msp_1,\msp_2)+\mathcal{T}(\tF^{n-1};\msp_1, F_*^{-1}\msp_2).
\]
Thus in this case we also get by induction
\[
\mathcal{T}(\tF^n;\msp_1,\msp_2)= n\mathcal{T}(\tF;\msp_1,\msp_2);
\]
again, dividing by $n$ and passing to the limit, we obtain the desired relation.
\end{proof}

Theorem~\ref{t.main} now follows from Theorem~\ref{p:formula}:
\begin{proof}[Proof of Theorem~\ref{t.main}]
As measures $\msp_{E_1}$ and $\msp_{E_2}$ are invariant for $F_{E_1}$ and $F_{E_2}$ respectively, from Theorem~\ref{p:formula} we have
\[
\rho(E_1)= \mathcal{T}(\tF_{E_1};\msp_{E_1},\msp_{E_2}), \quad
\rho(E_2)= \mathcal{T}(\tF_{E_2};\msp_{E_1},\msp_{E_2}).
\]
Using the definition of the translation value $\mathcal{T}$, we get
\begin{multline}\label{eq:diff-rho}
\rho(E_2)-\rho(E_1)= \mathcal{T}(\tF_{E_2};\msp_{E_1},\msp_{E_2})- \mathcal{T}(\tF_{E_1};\msp_{E_1},\msp_{E_2})
\\
=\int_X  \left( \Phi_{\msp_{E_2,Tx}} (\msp'_{E_1,Tx},(\widetilde{f}_{E_2,x})_* \msp'_{E_1,x}) -
\Phi_{\msp_{E_2,Tx}} (\msp'_{E_1,Tx},(\widetilde{f}_{E_1,x})_* \msp'_{E_1,x})
\right) \,
d\mgr(x)
\\=\int_X  \left( \Phi_{\msp_{E_2,Tx}} (
(\widetilde{f}_{E_1,x})_* \msp'_{E_1,x},(\widetilde{f}_{E_2,x})_* \msp'_{E_1,x})
\right) \,
d\mgr(x)
\end{multline}
where we have used the additivity relation~\eqref{eq:Phi-additive}.
Finally, using Lemma \ref{l.shift} the value under the integral in the right hand side of~\eqref{eq:diff-rho}  can be written as
\begin{multline*}
\Phi_{\msp_{E_2,Tx}} (
(\widetilde{f}_{E_1,x})_* \msp'_{E_1,x},(\widetilde{f}_{E_2,x})_* \msp'_{E_1,x}) =
\int_{\R} \Phi_{\msp_{E_2,Tx}} (\widetilde{f}_{E_1,x}(\ty),\widetilde{f}_{E_2,x}(\ty)) \, d\msp'_{E_1,x}(\ty)
\\
= \int_{\Sc} \theta_{E_1,E_2;x,\msp_{E_2,T_x}}(y)  \, d\msp_{E_1,x}(y).
\end{multline*}
Substituting it to~\eqref{eq:diff-rho}, and recalling that $\msp_{E_1,x}$ are the conditional measures of $\msp_{E_1}$ w.r.t.~$\mgr$, we obtain the desired
\begin{multline*}
\rho(E_2)-\rho(E_1)=\int_X  \left( \int_{\Sc} \theta_{E_1,E_2;x,\msp_{E_2,T_x}}(y)  \, d\msp_{E_1,x}(y)
\right) \,
d\mgr(x)
\\
=\int_{X\times \Sc}   \theta_{E_1,E_2;x,\msp_{E_2,T_x}}(y) \,
d\msp_{E_1}(x,y).
\end{multline*}
\end{proof}

\section{The fibered rotation number for random dynamical systems}\label{s.proofs}

Here we provide the proofs of Theorems \ref{t.main2} and \ref{c:iid-Holder} and discuss an alternative argument that could a better intuition behind the statement of Theorems \ref{t.main2}. Finally, in the  section \ref{ss.example} we give an example of a one parameter family of random dynamical systems on the circle that shows that the assumption of absence of invariant measures in Theorem \ref{c:iid-Holder}  cannot be removed.

\subsection{Proof of Theorems \ref{t.main2} and \ref{c:iid-Holder}}\label{s.T2}

\begin{proof}[Proof of Theorem \ref{t.main2}]
Theorem \ref{t.main2} can be deduced from Theorem \ref{t.main}. Indeed, for given $E_1, E_2$ choose the corresponding stationary measures $\nu_{E_1}^+$ and $\nu_{E_2}^-$. Existence of the stationary measures is guaranteed by \cite[Theorem 1.7.5 and Theorem 2.1.8]{Ar}.

Denote $X=\Omega^{\Z}$,  $x=\ldots \omega_{-n}\ldots\omega_0\omega_1\ldots\omega_n\ldots\in X$, and let $\sigma:\Omega^{\Z}\to \Omega^{\Z}$ be the left shift, and $\mu=\P^{\Z}$. Also, set $X=\Omega^{\Z}=\Omega^-\times \Omega_0\times \Omega^+$, where
$$
\Omega^-=\{(\ldots\omega_{-n}\ldots \omega_{-1})\ |\ \omega_i\in \Omega\}, \Omega^+=\{(\omega_1 \ldots\omega_{n}\ldots)\ |\ \omega_i\in \Omega\}, \Omega_0=\Omega.
$$
For any $x\in X, x=\ldots \omega_{-n}\ldots\omega_0\omega_1\ldots\omega_n\ldots,$ denote $x^-=(\ldots\omega_{-n}\ldots \omega_{-1})\in \Omega^-$, $x^+=(\omega_1 \ldots\omega_{n}\ldots)\in \Omega^+$, $x_0=\omega_0\in \Omega_0.$

Consider the cocycle
$$
F_E:X\times\Sc\to X\times\Sc, F_E(x, y)=(\sigma x, g_{E, x_0}(y)),
$$
and denote by $\pi_{\Sc}:X\times \Sc \to \Sc$ a natural projection.
 There is a one-to-one correspondence between stationary measures and invariant measures of $F_E$ that ``depend only on the past'' (called {\it forward Markov measures} in \cite{Ar}), and stationary measures for inverse maps and invariant measures of $F_E$ that ``depend only on the future'' (called {\it backward Markov measures} in \cite{Ar}).  Namely, there are invariant measures $\eta_{E_1}^+$ and $\eta_{E_2}^-$ with $(\pi_{\Sc})_*\eta^+_{E_1}=\nu^+_{E_1}, \ (\pi_{\Sc})_*\eta^-_{E_2}=\nu^-_{E_2}$, and such that measures $\left\{\eta_{E_1, x}^+\right\}_{x\in X}$ in the decomposition of $\eta_{E_1}^+$ depend only on $x^-$, and measures $\left\{\eta_{E_2, x}^-\right\}_{x\in X}$ in the decomposition of $\eta_{E_2}^-$ depend only on $(x_0, x^+).$

Application of Theorem \ref{t.main} to the cocycle $F_E$ and measures $\eta_{E_1}^+$ and $\eta_{E_2}^-$ gives:
\begin{multline}
    \rho(E_2)-\rho(E_1)=\int_{X\times \Sc}\Phi_{\eta^-_{E_2, \sigma(x)}}(\tg_{E_1, x_0}(\ty), \tg_{E_2, x_0}(\ty))d\eta^+_{E_1}(x, y)=\\
    \int_X\int_{\Sc}\Phi_{\eta^-_{E_2, x^+}}(\tg_{E_1, x_0}(\ty), \tg_{E_2, x_0}(\ty))d\eta^+_{E_1, x^-}(y)d\P^{\N}(x^-) d\P^{\N}(x^+) d\P(x_0)=\\
    \int_{\Omega_0}\int_{\Sc}\Phi_{\nu^-_{E_2}}(\tg_{E_1, x_0}(\ty), \tg_{E_2, x_0}(\ty))d\nu^+_{E_1}(y)d\P(x_0)=\\
    \E \int_{\Sc} \Phi_{\nu_{E_2}^{-}} (\widetilde{g}_{E_1,\omega}(y),\widetilde{g}_{E_2,\omega}(y)) \, d\nu_{E_1}^+(y).
\end{multline}
\end{proof}

\begin{proof}[Proof of Theorem \ref{c:iid-Holder}]
To simplify the exposition, we provide the proof in the setting of Corollary \ref{c:iid}; the proof in a more general setting of Theorem \ref{t.main2} is analogous.

By Lipchitz continuity of the maps $f_{E, \omega}$ in parameter, there exists $C_0>0$ such that, in the notations of Corollary \ref{c:iid}, for any $\ty\in\R$, $\omega\in \Omega$, and $E_1, E_2\in J$ we have
$$
|I_{E_1, E_2, \omega}(y)|\le C_0|E_2-E_1|.
$$
The conditions (measure condition and moments condition) in Theorem \ref{c:iid-Holder} allow to apply Theorem 1.1 from \cite{GKM}, which implies that the stationary measures $\nu_{E_1}^+$ and $\nu_{E_2}^-$ are H\"older continuous, i.e. for some $\alpha>0,$ $C'>0$, and any interval $I\subseteq \Sc$ one has
$$
\nu_{E_1}^+(I)\le C'|I|^\alpha, \ \ \nu_{E_2}^-(I)\le C'|I|^\alpha.
$$
Therefore, by Theorem \ref{t.main2} we have
\begin{multline}
|\rho(E_2)-\rho(E_1)| \le\\
\E \int_{\Sc} \nu_{E_2}^{-} (I_{E_1,E_2;\omega}(y)) \, d\nu_{E_1}^+(y)\le
\E \int_{\Sc} C'|I_{E_1,E_2;\omega}(y)|^\alpha \, d\nu_{E_1}^+(y)\le \\
\E \int_{\Sc} C'C_0^\alpha |E_2-E_1|^\alpha \, d\nu_{E_1}^+(y)=C'C_0^\alpha|E_2-E_1|^\alpha=
C |E_2-E_1|^{\alpha},
\end{multline}
with $C=C'C_0^\alpha.$


\end{proof}

\subsection{Alternative construction}\label{ss.alt}

It can be interesting to notice that Corollary~\ref{c:iid} admits also a different 
interpretation from the random dynamical point of view. Here we give a ``hand waiving'' explanation of this alternative approach; by filling out the details it can be turned into an alternative  rigorous proof, at least under some extra assumptions on the random dynamical system.

Consider the increment between the long random images of a given initial point~$y_0$ as the parameter varies between $E_1,E_2$. It can be represented as a sum of the increments, when among $n$ composed maps only for one the parameter is varied:
\begin{multline}\label{eq:shift}
(\tg_{E_2,\omega_n}\circ \dots \circ \tg_{E_2,\omega_1})(y_0)-
(\tg_{E_1,\omega_n}\circ \dots \circ \tg_{E_1,\omega_1})(y_0)=
\\
=\sum_{j=1}^n \bigl[
(\tg_{E_2,\omega_n}\circ \dots \circ \tg_{E_2,\omega_{j+1}}\circ \tg_{E_2,\omega_j} \circ \tg_{E_1,\omega_{j-1}} \dots \circ \tg_{E_1,\omega_1})(y_0) -
\\
(\tg_{E_2,\omega_n}\circ \dots \circ \tg_{E_2,\omega_{j+1}}\circ \tg_{E_1,\omega_j} \circ \tg_{E_1,\omega_{j-1}} \dots \circ \tg_{E_1,\omega_1})(y_0)
\bigr]
\end{multline}

Now, the expectation of this increment is equal to $(\rho(E_2)-\rho(E_1))\cdot n+o(n)$. On the other hand, the expectation of increments in the right hand side of~\eqref{eq:shift} can be analysed separately for different $j$. Namely, under quite general assumptions on the system, a long random composition sends all the circle, except for a small neighbourhood of a (random) ``repelling'' point, to a small neighbourhood of a (random) ``attracting'' point, see \cite{A, KN}; this can be viewed as a non-linear analog of positivity of Lyapunov exponents in Furstenberg Theorem on random matrix products and its variations \cite{Fur, FK, FKif, GK2}. The ``attracting'' point is distributed w.r.t $\msp^+$, the ``repelling'' one w.r.t. $\msp^-$.

\begin{figure}[!ht]
\includegraphics[scale=0.8]{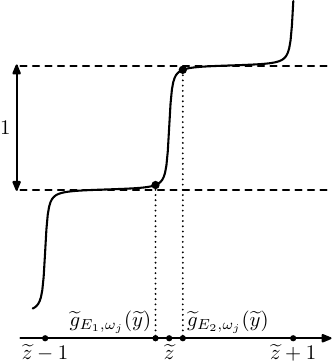}
\caption{The graph of the composition $\tg_{E_2,\omega_n}\circ \dots \circ \tg_{E_2,\omega_{j+1}}$ and the effect of changing the parameter at the $j$-th iteration.} \label{f:graphs}
\end{figure}

Next, let us calculate the expectations of individual increments in the right hand side of~\eqref{eq:shift}. When both $j$ and $n-j$ are large, the first $j-1$ maps send the initial point to a random point $y$, distributed (almost) w.r.t. the measure $\msp_{E_1}^+$, while the last $n-j$ define $\msp_{E_2}^-$-distributed ``repelling'' point $z$ on the circle. Then, if the arc $[\tg_{E_1,\omega_j}(y),\tg_{E_2,\omega_j}(y)]$ contains $z$, the image will be increased by (almost)~1, otherwise its increment is neglectable: see Fig.~\ref{f:graphs}. Hence, the expectation of this increment is asymptotically (as $j$ and $n-j$ become large) given by the probability of the ``arc covering the repelling point'' event, and that is given exactly by the expectation
\[
\E (\nu_{E_1}^+ \times \nu_{E_2}^{-})  (K_{E_1,E_2;\omega}).
\]

As it does not depend on $n$ or $j$, dividing by $n$ and passing to the limit would provide 
the conclusion of Corollary~\ref{c:iid}.


\subsection{Example}\label{ss.example}

Let us provide an explicit example of a one parameter family of random dynamical system on the circle with the fibered rotation number that is not H\"older. The example demonstrates that one cannot remove the condition of absence of invariant measure in Theorem \ref{c:iid-Holder}.

Let $f:S^1\to S^1$ be an orientation preserving  diffeomorphism of the circle that has exactly  two fixed points - one hyperbolic attractor and one hyperbolic repeller. Set $g_{E, 1}=R_E\circ f$ and $g_{E, 2}=R_E\circ f^{-1}$.

\begin{figure}[!ht]
\includegraphics[scale=0.9]{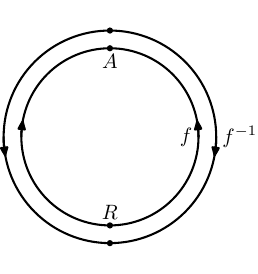} \quad
\includegraphics[scale=0.9]{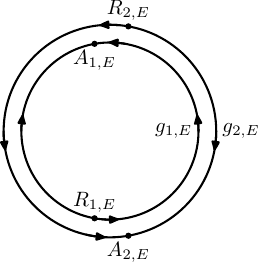} \quad
\includegraphics[scale=0.9]{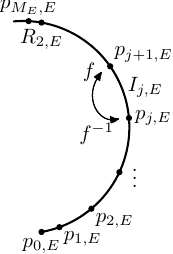}
\caption{Map $f$ and the system $\{g_{E,1}, g_{E,2}\}$ for $E>0$}\label{f:system}
\end{figure}

Consider a random dynamical system on the circle generated by $\{g_{E,1}, g_{E,2}\}$ applied randomly with equal probabilities. Choose the lifts $\tg_{E,1}, \tg_{E,2}$ such that $\tg_{E, 1}|_{E=0}(0)=\tg_{E, 2}|_{E=0}(0)=0.$ Then for the corresponding fibered rotation number we have $\rho(0)=0$.

At the same time, one can show
that for small positive $E>0$ one has
\begin{equation}\label{eq:rho}
\rho(E)\gtrsim \left(\log E^{-1}\right)^{-2}.
\end{equation}
Indeed, consider the attractors $A_{1,E}$ and $A_{2,E}$ and the repellers $R_{1,E}$ and $R_{2,E}$ of $g_{1,E}$  and $g_{2,E}$ respectively (see Fig.~\ref{f:system}). Note that at the arcs $[R_{2,E},A_{1,E}]$ and $[R_{1,E},A_{2,E}]$, the random iterations of an initial point can move only in the positive direction, and crossing these arcs requires in average a constant number of iterations. To study the crossing of the arc $[A_{2,E},R_{2,E}]$, consider the images
\[
p_{j,E}:=f^j(A_{2,E}), \quad j\ge 0,
\]
and let
\[
M_E:=\min\{j: p_{j,E}\in [R_{2,E},A_{1,E}]\}.
\]
Then $M_E \sim c \log E^{-1}$, as the points $R_{2,E}$ and $A_{2,E}$ are shifted by distance of order~$E$ from the attractor and the repeller of~$f$. Take the (half-open) arcs $I_{j,E}=[p_{j,E},p_{j+1,E})$; we then have
\[
I_{M_E,E}\subset [R_{2,E},A_{1,E}].
\]

Consider now the random sequence $j_{n,\omega}$ of indices, such that the arc $I_{j_{n,\omega},E}$ contains the random image
\begin{equation}\label{eq:g-iter}
g_{\omega_n,E}\circ\dots\circ g_{\omega_1,E}(x_0)
\end{equation}
of the initial point $x_0\in I_0$. Now, the comparison
\[
g_{1,E}(x)> f(x), \quad g_{2,E}(x)> f^{-1}(x)
\]
implies
\[
j_{n,\omega} \ge
\begin{cases}
    j_{n-1,\omega} + 1, & \omega_n=1,\\
    j_{n-1,\omega} - 1, & \omega_n=2.
\end{cases}
\]
The sequence of indices $j_{n,\omega}$ can thus be coupled with a $+1/-1$ random walk on the interval $[0,M_E]$ that is reset at $0$ if tries to go in the negatives: this random walk serves as a lower bound for~$j_{n,\omega}$. Next, the expected time for this random walk to reach $M_E$, starting at~$j=0$, can be easily calculated: namely, the expectation $T_{j,E}$ as a function of a starting index $j$ satisfies the difference equation
\[
T_{j,E} = 1+ \frac{T_{j-1,E}+T_{j+1,E}}{2}
\]
with the boundary conditions
\[
T_{M_E,E}=0, \quad T_{-1,E}=T_{0,E}.
\]
Solving it leads to
\[
T_{j,E}=M_E(M_E+1) - j(j+1),
\]
that has an asymptotics~$\sim M_E^2$ at $j=0$.

The expected number of steps for the random iterations~\eqref{eq:g-iter} to cross the arc $[A_{2,E},R_{2,E}]$ hence is also bounded from above by
\[
M_E^2 \sim c^2 (\log E^{-1})^2.
\]
The same applies to the expected number of iterates to cross~$[A_{1,E},R_{1,E}]$; hence, the fibered rotation number admits a lower bound by
\[
\frac{1}{2M_E^{2}} \sim \frac{1}{2c^{2}} (\log E^{-1})^{-2},
\]
thus proving~\eqref{eq:rho}. Therefore, the fibered rotation number cannot be H\"older continuous.

\begin{remark}
  While H\"older regularity of different structures (Lyapunov exponents, rotation number, stationary measures, topological conjugacies, holonomy maps along stable/unstable foliations, etc.) does appear in dynamics in many cases, it is not quite universal, as the example above shows. For some other examples of similar nature we refer the reader to~\cite{DKS} for a 
construction of a random projective cocycle with non-H\"older Lyapunov exponents\footnote{We refer the reader to \cite{DK1}, \cite{DK2}, \cite{V} for a general theory of Lyapunov exponents of linear cocycles.}, and to \cite[Appendix A]{GKM} for an example of a proximal and strongly irreducible random linear cocycle with finite polynomial but infinite  exponential moments such that the corresponding stationary measure on the projective space is not H\"older.
\end{remark}

\section{H\"older continuity of the IDS in Anderson Model:\\ proof of Theorem \ref{t.IDS}}\label{s.backgroundAM}


\begin{proof}[Proof of Theorem \ref{t.IDS}]
Transfer matrices that correspond to the ergodic family of Schr\"odinger operators
$$
[H_x \psi](n) = \psi(n+1) + \psi(n-1) + V_x(n) \psi(n)
$$
on $\ell^2 (\Z)$ have the form $\begin{pmatrix}
  E-\varphi(x) & -1 \\
  1 & 0 \\
\end{pmatrix}$. Their pojectivizations act on the projective line that we will identify with the circle $\Sc$; we denote these maps by $g_{E, x}:\Sc\to \Sc$. Therefore, the Schr\"odinger cocycle can be represented as a skew product with the circle fibers:
$$
T_E:M\times \Sc\to M\times \Sc, \ T_E(x, y)=(G(x), g_{E,x}(y)).
$$
Adding random noise leads to a random dynamical system with the phase space $M\times \Sc$,
$$
T_{E, \omega}:M\times \Sc\to M\times \Sc, \ T_{E, \omega}(x, y)=(G(x), g_{E,x, \omega}(y)),
$$
where $g_{E,x, \omega}$ is a projective map that corresponds to a random matrix  $\begin{pmatrix}
  E-\varphi(x) -W_\omega & -1 \\
  1 & 0 \\
\end{pmatrix}$, and $W_\omega$ is a random variable distributed with respect to the measure $\xi$. Denote by $\pi_M:M\times \Sc\to M$ the natural projection.  For each $E\in J$, there exits a (possibly not unique) stationary measure $\nu^+_E$ on $M\times \Sc$  such that $(\pi_M)_*\nu^+_E=m$. Similarly, there exists a stationary measure for inverse maps, $\nu^-_E$ with $(\pi_M)_*\nu^-_E=m$.

\begin{lemma}\label{l.decomp}
Let $\{\nu^+_{E, x}\}_{x\in M}$ be a decomposition of the stationary measure $\nu_E^+$ into the measures on the fibers. Then for any compact $J\subseteq\R$, there exit $C>0, \alpha>0$ such that for any $E\in J$ and  $m$-a.e. $x\in M$ the measure $\nu^+_{E, x}$ on $\Sc$ is $(C, \alpha)$-H\"older continuous. The same holds for the decomposition $\{\nu^-_{E, x}\}_{x\in M}$ of the measure $\nu^-_E$.
\end{lemma}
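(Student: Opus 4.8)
The strategy is to read the family $\{\nu^+_{E,x}\}_{x\in M}$ as the fiberwise data of a stationary measure of a random dynamical system on $\Sc$ sitting over the ergodic background $(M,G,m)$, and to apply the H\"older regularity results of \cite{GKM} in the form valid for such background systems. Identifying $\Sc$ with $\RP^1$, the fiber map $g_{E,x,\omega}$ is the projective action of the transfer matrix
\[
A_{E,x,t}=\begin{pmatrix} E-\varphi(x)-t & -1 \\ 1 & 0\end{pmatrix}\in\SL(2,\R),\qquad t=W_\omega,
\]
hence an orientation-preserving analytic diffeomorphism of $\Sc$, and $(g_{E,x,\omega})_*\nu^+_{E,x}=\nu^+_{E,Gx}$ for $m$-a.e.\ $x$ and $\xi^{\Z}$-a.e.\ $\omega$. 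Two hypotheses of \cite{GKM} have to be checked, with all constants uniform over $E$ in the compact interval $J$: a positive-exponent moment bound on the fiberwise distortion, and the absence of an a.s.-invariant family of fiber measures.

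For the moment bound, observe that for $E\in J$ and $x\in M$ one has $\|A_{E,x,W_\omega}\|\le C_J(1+|W_\omega|)$ with $C_J$ depending only on $J$ and $\max_{x\in M}|\varphi(x)|$; since the distortion of a M\"obius transformation is controlled by the square of the norm of a representing $\SL(2,\R)$ matrix, $\mathfrak{L}(g_{E,x,\omega})\le \|A_{E,x,W_\omega}\|^2\le C_J^2(1+|W_\omega|)^2$. The assumed moment $\int_\R|t|^\gamma\,d\xi(t)<\infty$ then gives
\[
\int (\mathfrak{L}(g_{E,x,\omega}))^{\gamma/2}\,d\xi(\omega)\ \le\ C_J^{\gamma}\int_\R (1+|t|)^{\gamma}\,d\xi(t)\ <\ \infty ,
\]
a bound uniform in $E\in J$ and in $x\in M$. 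The same estimate applies to the inverse maps, since $\mathfrak{L}(g_{E,x,\omega}^{-1})=\mathfrak{L}(g_{E,x,\omega})$, which will handle $\nu^-_{E,x}$ at the end.

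For the non-degeneracy, suppose a measurable family $x\mapsto\mu_x$ of probability measures on $\Sc$ satisfied $(g_{E,x,\omega})_*\mu_x=\mu_{Gx}$ for $\xi^{\Z}$-a.e.\ $\omega$. Because the fiber map depends on $\omega$ only through the coordinate $\omega_0$, this forces $(g_{E,x,t})_*\mu_x=\mu_{Gx}$ for $\xi$-a.e.\ $t$. Choosing $t_1\ne t_2$ in $\supp\xi$ (possible since $\#\Omega\ge 2$), the measure $\mu_x$ is invariant under the nontrivial parabolic $A_{E,x,t_1}^{-1}A_{E,x,t_2}$, so it is the Dirac mass at its unique fixed point $[0:1]\in\RP^1$; pushing forward once gives $\mu_{Gx}=\delta_{[1:0]}$, and pushing forward once more gives $\mu_{G^2x}=\delta_{[E-\varphi(Gx)-t\,:\,1]}$, which still depends on $t$ --- contradicting $\#\supp\xi\ge 2$. (Equivalently, one may invoke positivity of the Lyapunov exponent of the random cocycle, which holds here for every $E$ by Furstenberg's theorem --- the noise makes the generated subgroup non-compact and strongly irreducible --- together with $\E\log^+\mathfrak{L}<\infty$.)

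With both hypotheses verified, \cite{GKM} provides $C,\alpha>0$ with $\nu^+_{E,x}(I)\le C|I|^\alpha$ for every arc $I\subseteq\Sc$ and $m$-a.e.\ $x$, and the same for $\nu^-_{E,x}$ via the inverse system. Uniformity over $E\in J$ follows because the moment bound above is uniform in $E$, while the remaining quantitative input of \cite{GKM} --- a lower bound on the available fiber contraction, equivalently on the Lyapunov exponent $\lambda(E)$ --- depends continuously on $E$ and is therefore bounded away from its degenerate value on the compact set $J$. I expect the step requiring the most care to be precisely this matching with \cite{GKM}: making sure the background (rather than i.i.d.) version of the estimate is available in the required form --- a fiberwise, uniform-in-$x$ H\"older bound --- and that its constants are governed only by the moment data (made uniform in $E$ above) and by a non-degeneracy quantity controllable uniformly in $E$; the moment estimate and the parabolic computation are routine.
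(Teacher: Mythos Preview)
Your plan has the right ingredients --- the moment bound and the parabolic non-degeneracy argument are both correct and match what the paper needs --- but the step you flag as uncertain is exactly where the paper's proof diverges from yours, and their resolution is worth knowing.

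You look for a ``background'' version of \cite{GKM}, i.e.\ a theorem that takes as input a cocycle over $(M,G,m)$ and outputs fiberwise H\"older regularity of stationary conditionals, with constants tracked through $\lambda(E)$. The paper does not do this. Instead it uses the \emph{non-stationary i.i.d.} form of the result (their Theorem~2.8 from \cite{GKM}): one fixes a compact set $\mathcal{K}\subset\mM$ of distributions on $\Diff^1(\Sc)$, checks a uniform moment condition and a ``no deterministic images'' condition on $\mathcal{K}$, and obtains constants $C,\alpha,\kappa$ such that \emph{any} $n$-fold convolution $\mu_n*\cdots*\mu_1*\nu_0$ with $\mu_i\in\mathcal{K}$ is $(C,\alpha)$-H\"older at scales $>\kappa^n$. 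Applied with $\mathcal{K}=\{\mu_a^E:|a|\le\|\varphi\|_\infty,\,E\in J\}$, where $\mu_a^E$ is the law of the projective map of $\begin{pmatrix}E-a-W_\omega & -1\\ 1 & 0\end{pmatrix}$, this gives the fiberwise bound immediately from
\[
\nu^+_{E,x}=\lim_{n\to\infty}\mu^E_{\varphi(G^{-1}x)}*\cdots*\mu^E_{\varphi(G^{-n+1}x)}*\nu^+_{E,G^{-n}x},
\]
and uniformity in $E\in J$ comes for free from compactness of $\mathcal{K}$, with no appeal to continuity of $\lambda(E)$.

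One technical point your plan misses: the single-step distributions $\mu_a^E$ \emph{do} have a deterministic image --- $A_{E,x,t}[0\!:\!1]=[1\!:\!0]$ for every $t$ --- so the ``no deterministic images'' hypothesis fails for $\mathcal{K}$ as written. The paper handles this by passing to two-step convolutions $\{\mu_a^E*\mu_b^E\}$; your parabolic computation (pushing $\delta_{[0:1]}$ forward twice and seeing the $t$-dependence reappear) is exactly the verification that the two-step family has no deterministic images.
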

\begin{proof} The proof is a straightforward application of the results from \cite{GKM}. Namely, denote by $\mM$ the space of all probability measures on $\Diff^1(\Sc)$ equipped with weak-$\ast$ topology. For a measure $\nu$ on $\Sc$ and a distribution $\mu\in \mM$ denote $\mu*\nu=\int_{\Diff^1(M)} (f_* \nu) \, d\mu(f).$ The following result holds for smooth random dynamical systems on arbitrary manifold, but here we formulate it specifically for the case of the circle:

 \begin{theorem}[Theorem 2.8 from \cite{GKM}]\label{t:main-3}
    Let $\mathcal{K}\subset \mM$ be a compact, satisfying the following conditions:
    \begin{itemize}
        \item \textbf{(uniform finite moment condition)} There exists $\gamma>0$, $C_0$ such that for any $\mgr\in\mathcal{K}$ one has
        \begin{equation} \label{PosMomentCondComp}
            \int \mathfrak{L}(f)^\gamma \, d \mgr(f) < C_0,
        \end{equation}
        where $\mathfrak{L}(f)=\max_{y\in S^1}\max\left(|f'(y)|,  |(f^{-1})'(y)|\right).$
        \vspace{3pt}
        \item \textbf{(no deterministic images)} For any $\mgr\in\mathcal{K}$ there are no probability measures $\msp,\msp'$ on~$\Sc$ such that $f_* \msp = \msp'$ for $\mgr$-almost all $f\in \Diff^1(\Sc)$.
    \end{itemize}
    Then there exist $\alpha>0$, $C$, $\kappa<1$ such that for any initial measure $\msp_0$, any $n$, and any distributions $\mgr_1,\dots, \mgr_n\in \mathcal{K}$ the $n$-th image of $\msp_0$ satisfies $(\alpha,C)$-H\"older property on the scales up to $\kappa^n$:
    \[
        \forall x\in M \quad \forall r>\kappa^n \quad (\mgr_n * \dots * \mgr_1 * \msp_0)(B_r(x)) < Cr^{\alpha}.
    \]
\end{theorem}
For any $a\in \R$, denote by $\mu_a^E$ the distribution on $\Diff^1(\Sc)$ formed by projectivizations of the linear maps $\begin{pmatrix}
  E-a -W_\omega & -1 \\
  1 & 0 \\
\end{pmatrix},$ where $W_\omega$ is a random variable distributed with respect to the measure $\xi$. Since $M$ is compact and $\varphi:M\to \R$ is continuous, $\varphi$ must be bounded, $|\varphi|\le B$. Collection of measures $\{\mu_a^E\}_{E\in J, |a|\le B}$ is compact and satisfies uniform finite moment condition (since $\int|x|^\gamma d\xi<\infty$) from Theorem \ref{t:main-3}. Moreover, the collection of convolutions $\{\mu_a^E*\mu_b^E\}_{|a|, |b|\le B, E\in J}$ also satisfies no deterministic images condition (due to specific form of the transfer matrices, see \cite[Section 4.1]{GK3}).

For $m$-almost every $x\in M$ we have
$$
\nu^+_{E,x}=\lim_{n\to \infty}\mu^E_{\varphi(G^{-1}(x))}*\ldots * \mu^E_{\varphi(G^{-n+1}(x))}*\nu^+_{E, G^{-n}(x)}.
$$
Therefore,  Theorem \ref{t:main-3} implies that for $m$-almost every $x\in M$, the measure $\nu^+_{E, x}$ is $(C, \alpha)$-H\"older on any scale, hence $(C, \alpha)$-H\"older continuous.

By passing to the dynamics by inverse maps and applying exactly the same arguments, we get the statement for  the decomposition $\{\nu^-_{E, x}\}_{x\in M}$ of the measure~$\nu^-_E$.
\end{proof}

We have the following formula for an increment of the rotation number:
\begin{equation}\label{e.last}
\rho(E_2)-\rho(E_1)=\E \int_{M\times \Sc} \Phi_{\nu^-_{E_2, x}}(\tg_{E_2, x, \omega}(\ty), \tg_{E_1, x, \omega}(\ty))d\nu^+_{E_1}(x,y),
\end{equation}
which can be shown using almost verbatim the arguments used in the proof of Theorem \ref{t.main2}.

Similarly to the proof of Theorem \ref{c:iid-Holder}, Lipschiz continuity of functions $g_{E,  x, \omega}$ in the parameter $E\in J$, H\"older continuity of the fibered stationary measures $\{\nu^+_{E, x}\}_{x\in M}$ and $\{\nu^-_{E, x}\}_{x\in M}$ guaranteed by Lemma \ref{l.decomp}, and the formula (\ref{e.last}) for the rotation number  imply H\"older regularity of the rotation number, and hence the local H\"older regularity of the IDS of the family of operators $\{H_x+W_\omega\}_{x\in X, \omega\in \Omega}$.


\end{proof}


\section*{Acknowledgments}
We would like to thank the Institut Mittag-Leffler for hospitality; the first two authors participated in the program  ``Two dimensional maps''  that was organized there in Spring 2023, and that is when this project was initiated. Also, we are grateful to Lana Jitomirskaya for providing several useful references, to Dominique Malicet for his remark that lead to Remark \ref{r.DM}, and to Jairo Bochi, Serge Cantat, \'Etienne Ghys, and Jake Fillman for their attention to our work.

\end{document}